\author{St\'ephane Launois\thanks{This research was supported by a Marie Curie Intra-European
 Fellowship within the $6^{\mbox{th}}$ European Community Framework Programme held at the University of Edinburgh.}}
\title{On the automorphism groups of $q$-enveloping algebras of nilpotent Lie algebras.}
\date{ }
\newcommand{\para}{\addtocounter{subsubsection}{1}%
{\noindent{\it \thesubsubsection.\:}}}
\newcommand{\hc}{\mathcal{H}}
\newcommand{\spec}{\mathrm{Spec}}
\newcommand{\prim}{\mathrm{Prim}}
\newcommand{\aut}{\mathrm{Aut}}
\newcommand{\g}{\mathfrak{g}}
\newcommand{\comp}{\mathbb{C}}
\newcommand{\ideal}[1]{\langle {#1}\rangle}
\newcommand{\U}{U_q^+(\g)}
\newcommand{\Us}{U_q^+({\mathfrak sl}_3)}
\newcommand{\Ub}{U_q^+(\mathfrak{so}_5)}
\newenvironment{proof}{\begin{trivlist}\item[]{\it
Proof.}}{\hfill$\square$\end{trivlist}}
\begin{document}
\maketitle

\begin{abstract}

We investigate the automorphism group of the quantised enveloping algebra $\U$ of the positive nilpotent part of certain simple complex Lie algebras $\g$ in the case where the deformation parameter $q \in \mathbb{C}^*$ is not a root of unity. Studying its action on the set of minimal primitive ideals of $\U$ we compute this group in the cases where $\g= \mathfrak{sl}_3$ and $\g=\mathfrak{so}_5$ confirming a Conjecture of Andruskiewitsch and Dumas regarding the automorphism group of $\U$. In the case where $\g= \mathfrak{sl}_3$, we retrieve the description of the automorphism group of the quantum Heisenberg algebra that was obtained independently by Alev and Dumas, and Caldero. In the case where $\g=\mathfrak{so}_5$, the automorphism group of $\U$ was computed in \cite{launoisB2} by using previous results of Andruskiewitsch and Dumas. In this paper, we give a new (simpler) proof of the Conjecture of Andruskiewitsch and Dumas in the case where $\g=\mathfrak{so}_5$ based both on the original proof and on graded arguments developed in \cite{LaunoisLenagan2005} and \cite{launoislopes}. 
\end{abstract}


\newtheorem{theo}{Theorem}[section]
\newtheorem{defi}[theo]{Definition}
\newtheorem{defis}[theo]{Definitions}
\newtheorem{rem}[theo]{Remark}
\newtheorem{rems}[theo]{Remarks}
\newtheorem{nota}[theo]{Notation}
\newtheorem{notas}[theo]{Notations}
\newtheorem{prop}[theo]{Proposition}
\newtheorem{hypo}[theo]{Hypothese}
\newtheorem{lem}[theo]{Lemma}
\newtheorem{conv}[theo]{Convention}
\newtheorem{convs}[theo]{Conventions}
\newtheorem{cor}[theo]{Corollary}
\newtheorem{obs}[theo]{Observation}
\newtheorem{obss}[theo]{Observations}
\newtheorem{rap}[theo]{Recall}
\newtheorem{raps}[theo]{Rappels}

\newtheorem{paragraphe}{}[subsection]

\section*{Introduction}

In the classical situation, there are few results about the automorphism group of the enveloping algebra 
$U(\mathcal{L})$ of a Lie algebra $\mathcal{L}$ over $\comp$; except when $\dim\mathcal{L}\leq 2$, these groups are known to possess ``wild'' automorphisms and are far from being understood. For instance, this is the case 
when $\mathcal{L}$ is the three-dimensional abelian Lie algebra \cite{umirbaev}, when $\mathcal{L} = \mathfrak{sl}_{2}$ \cite{joseph} and when $\mathcal{L}$ is the three-dimensional Heisenberg Lie algebra \cite{alevclassique}.

In this paper we study the quantum situation. More precisely, we study the automorphism group of the quantised enveloping algebra $\U$ of the positive nilpotent part of a finite dimensional simple complex Lie algebra $\g$ in the case where the deformation parameter $q \in \mathbb{C}^*$ is not a root of unity. Although it is a common belief that quantum algebras are ''rigid'' and so should possess few symmetries, little is known about the automorphism group of $\U$. Indeed, until recently, this group was known only in the case where $\g=\mathfrak{sl}_3$ whereas  the structure of the automorphism group of the augmented form $\check{U}_{q} (\mathfrak{b}^{+})$, where $\mathfrak{b}^{+}$ is the 
positive Borel subalgebra of $\mathfrak{g}$, has been described in \cite{fleury} in the general case. 

The automorphism group of $\Us$ was computed independently by Alev-Dumas, \cite{alevdumasrigidite}, and Caldero, \cite{caldero}, who showed that 
$$ \aut(\Us) \simeq (\comp^*)^2 \rtimes S_2.$$
Recently, Andruskiewitsch and Dumas, \cite{andrusdumas} have obtained partial results on the automorphism group of $\Ub$. In view of their results and the description of $\aut(\Us)$, they have proposed the following conjecture. \\ \\
\\{\bf Conjecture (Andruskiewitsch-Dumas, \cite[Problem 1]{andrusdumas}):}
$$\aut(\U) \simeq (\comp^*)^{\mathrm{rk}(\g)} \rtimes \mathrm{autdiagr}(\g),$$
where $\mathrm{autdiagr}(\g)$ denotes the group of automorphisms of the Dynkin diagram of $\g$.\\

Recently we proved this conjecture in the case where $\g = \mathfrak{so}_5$, \cite{launoisB2}, and, in collaboration with Samuel Lopes, in the case where $\g = \mathfrak{sl}_4$, \cite{launoislopes}. The techniques in these two cases are very different. Our aim in this paper is to show how one can prove the Andruskiewitsch-Dumas Conjecture in the cases where $\g = \mathfrak{sl}_3$ and $\g = \mathfrak{so}_5$ by first studying the action of $\aut(\U)$ on the set of minimal primitive ideals of $\U$ - this was the main idea in \cite{launoisB2} -, and then using graded arguments as developed in \cite{LaunoisLenagan2005} and \cite{launoislopes}. This strategy leads us to a new (simpler) proof of the Andruskiewitsch-Dumas Conjecture in the case where $\g=\mathfrak{so}_5$.

Throughout this paper, $\mathbb{N}$ denotes the set of nonnegative integers, $\comp^*:=\comp \setminus \{0\}$ and $q$ is a nonzero complex number that is not a root of unity.

\section{Preliminaries.}

In this section, we present the $\hc$-stratification theory of Goodearl and Letzter for the positive part $\U$ 
of the quantised enveloping algebra of a simple finite-dimensional complex Lie algebra $\g$. In particular, we present a criterion (due to Goodearl and Letzter) that characterises the primitive ideals of $\U$ among its prime ideals. In the next section, we will use this criterion in order to describe the primitive spectrum of $\U$ in the cases where $\g=\mathfrak{sl}_3$ and $\g= \mathfrak{so}_5$. 

\subsection{Quantised enveloping algebras and their positive parts.}

Let $\g$ be a simple Lie $\comp$-algebra of rank $n$. We denote by
$\pi=\{\alpha_1,\dots,\alpha_n\}$ the set of simple roots
associated to a triangular decomposition $\g=\mathfrak{n}^- \oplus
\mathfrak{h} \oplus \mathfrak{n}^+$. Recall that $\pi$ is a basis of an
euclidean vector space $E$ over $\mathbb{R}$, whose inner product is
denoted by $(\mbox{ },\mbox{ })$ ($E$ is usually denoted by $
\mathfrak{h}_{\mathbb{R}}^*$ in Bourbaki). We denote by  $W$ the
Weyl group of $\g$, that is, the subgroup of the orthogonal group of
$E$ generated by the reflections $s_i:=s_{\alpha_i}$, for $i \in
\{1,\dots,n\}$, with reflecting hyperplanes $H_i:=\{\beta \in E \mid
(\beta,\alpha_i)=0\}$, $i \in \{1,\dots,n\}$. The length of $w \in W$ is denoted 
by $l(w)$. Further, we denote by $w_0$ the longest element
of $W$. We denote by  $R^+$ the set of positive roots and by $R$ the set of
 roots. Set $Q^+:=\mathbb{N} \alpha_1 \oplus \dots \oplus
 \mathbb{N} \alpha_n$ and $Q:=\mathbb{Z} \alpha_1 \oplus \dots \oplus
 \mathbb{Z} \alpha_n$. Finally, we denote by $A=(a_{ij}) \in M_n(\mathbb{Z})$ the Cartan
 matrix associated to these data. As $\g$ is simple, $a_{ij} \in \{0,-1,-2,-3\}$ for all $i \neq j$. 

Recall that the scalar product of two roots $(\alpha,\beta)$ is always
an integer. As in \cite{bg}, we assume that the short roots have
length $\sqrt{2}$. 

For all $i \in \{1,\dots,n \}$, set
$q_i:=q^{\frac{(\alpha_i,\alpha_i)}{2}}$ and 
$$\left[ \begin{array}{l} m \\ k \end{array} \right]_i:=
\frac{(q_i-q_i^{-1}) \dots
  (q_i^{m-1}-q_i^{1-m})(q_i^m-q_i^{-m})}{(q_i-q_i^{-1})\dots
  (q_i^k-q_i^{-k})(q_i-q_i^{-1})\dots (q_i^{m-k}-q_i^{k-m})} $$
for all integers $0 \leq  k \leq  m$. By convention, 
$$\left[ \begin{array}{l} m \\ 0 \end{array} \right]_i:=1.$$

The quantised enveloping algebra $U_q(\g)$ of $\g$ over $\comp$ associated to
the previous data is the $\comp$-algebra generated by the
indeterminates $E_1,\dots,E_n,F_1,\dots , F_n,K_1^{\pm 1}, \dots, K_n^{\pm 1}$ subject to the following relations:
$$K_i K_j =K_j K_i $$
$$ K_i E_j K_i^{-1}=q_i^{a_{ij}}E_j  \mbox{ and }  K_i F_j K_i^{-1}=q_i^{-a_{ij}}F_j$$
$$E_i F_j -F_jE_i=\delta_{ij} \frac{K_i-K_i^{-1}}{q_i-q_i^{-1}} $$
and the quantum Serre relations:
\begin{eqnarray}
\label{Serrequantique} 
\sum_{k=0}^{1-a_{ij}} (-1)^k  \left[ \begin{array}{c} 1-a_{ij} \\ k
 \end{array} \right]_i E_i^{1-a_{ij} -k} E_j E_i^k=0  \mbox{ } (i \neq  j)
\end{eqnarray}
and 
$$\sum_{k=0}^{1-a_{ij}} (-1)^k  \left[ \begin{array}{c} 1-a_{ij} \\ k
 \end{array} \right]_i F_i^{1-a_{ij} -k} F_j F_i^k=0  \mbox{ } (i \neq  j).$$

We refer the reader to \cite{bg,jantzen,josephbook} for more details on
this (Hopf) algebra. Further, as usual, we denote by $U_q^+(\g)$ (resp. $U_q^-(\g)$)  the
subalgebra of $U_q(\g)$ generated by $E_1,\dots,E_n$
(resp. $F_1,\dots,F_n$) and by $U^0$ the subalgebra of $U_q(\g)$ generated by 
$K_1^{\pm 1},\dots, K_n^{\pm 1}$. Moreover, for all $\alpha =a_1 \alpha_1 + \dots + a_n \alpha_n \in Q$, we set 
$$K_{\alpha}:= K_1^{a_1} \cdots K_n^{a_n}.$$

As in the classical case, there is a triangular decomposition as vector spaces:
$$ U_q^-(\g) \otimes U^0 \otimes \U \simeq U_q(\g).$$
In this paper we are concerned with the algebra $\U$ that admits the following presentation, see \cite[Theorem 4.21]{jantzen}. The algebra $\U$ is (isomorphic to) the $\comp$-algebra generated by $n$ indeterminates $E_1, \dots , E_n$ subject to the quantum Serre relations (\ref{Serrequantique}).

\subsection{PBW-basis of $\U$.}
\label{sectionPBW}

To each reduced decomposition of the longest element $w_0$ of the Weyl group $W$ of $\g$, Lusztig has associated a PBW basis of $\U$, see for instance \cite[Chapter 37]{lusztigbook}, \cite[Chapter 8]{jantzen} or \cite[I.6.7]{bg}. The construction relates to a braid group action by automorphisms on $\U$. Let us first recall this action. For all $s \in \mathbb{N}$ and $i \in \{ 1, \dots , n\}$, we set 
$$[s]_i := \frac{q_i^s-q_i^{-s}}{q_i-q_i^{-1}} \ \ \mbox{ and } \ \ [s]_i ! :=[1]_i \dots [s-1]_i [s]_i.$$
As in \cite[I.6.7]{bg}, we denote by $\rm{T_i}$, for $ 1 \le i \leq n$, the automorphism of $\U$ defined by:  

$$ T_i(E_i)=-F_iK_i,$$
$$T_i(E_j)=\sum_{s=0}^{-a_{ij}} (-1)^{s-a_{ij}} q_i^{-s} E_i^{(-a_{ij}-s)} E_j E_i^{(s)}, \hspace{5mm} i \neq j$$
$$ T_i(F_i)=-K_i^{-1}E_i,$$
$$T_i(F_j)=\sum_{s=0}^{-a_{ij}} (-1)^{s-a_{ij}} q_i^{s} F_i^{(s)} F_j F_i^{(-a_{ij}-s)}, \hspace{5mm} i \neq j$$
$$T_i (K_{\alpha})=K_{s_i(\alpha)} , \hspace{5mm} \alpha \in Q,$$
where $E_i^{(s)}:= \frac{E_i^s}{[s]_i !}$ and $F_i^{(s)}:= \frac{F_i^s}{[s]_i !}$ for all 
$s \in \mathbb{N}$. It was proved by Lusztig that the automorphisms $T_i$ satisfy the braid relations, that is, if $s_is_j$ has order $m$ in $W$, then $$T_iT_jT_i \dots = T_j T_i T_j \dots ,$$
where there are exactly $m$ factors on each side of this equality.

 The automorphisms $T_i$ can be used in order to describe  PBW bases of $\U$ as follows. It is well-known that the length of $w_0$ is equal to the number $N$ of positive roots of $\g$. Let $s_{i_1} \cdots s_{i_N} $ be a reduced decomposition of $w_0$. For $k \in \{1, \dots, N\}$, we set $\beta_k:= s_{i_1} \cdots s_{i_{k-1}} (\alpha_{i_k})$. Then $\{\beta_1, \dots, \beta_N\}$ is exactly the set of positive roots of $\g$. Similarly, we define elements $E_{\beta_k}$ of $U_q(\g)$ by
$$E_{\beta_k}:= T_{i_1} \cdots T_{i_{k-1}} (E_{i_k}).$$
Note that the elements $E_{\beta_k}$ depend on the reduced decomposition of $w_0$. The following well-known results were proved by Lusztig and Levendorskii-Soibelman.

\begin{theo}[Lusztig and Levendorskii-Soibelman]
\label{theofond}
$ $
\begin{enumerate}
 \item For all $k \in \{ 1, \dots, N\}$, the element $E_{\beta_k}$ belongs to $\U$.
 \item If $\beta_k=\alpha_i$, then $E_{\beta_k}=E_i$.
 \item The monomials $E_{\beta_1}^{k_1} \cdots E_{\beta_N}^{k_N}$, with $k_1, \dots, k_N \in \mathbb{N}$, 
 form a linear basis of $\U$.
 \item For all $1 \leq i < j \leq N$, we have 
 $$E_{\beta_j} E_{\beta_i} -q^{-(\beta_i , \beta_j)} E_{\beta_i} E_{\beta_j}=
 \sum a_{k_{i+1},\dots,k_{j-1}} E_{\beta_{i+1}}^{k_{i+1}} \cdots E_{\beta_{j-1}}^{k_{j-1}},$$
 where each $a_{k_{i+1},\dots,k_{j-1}} $ belongs to $\comp$.
\end{enumerate}
\end{theo}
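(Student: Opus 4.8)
The plan is to prove the four assertions of Theorem~\ref{theofond} (due to Lusztig and Levendorskii--Soibelman) by exploiting the braid group action together with the properties of the root datum encoded in the reduced decomposition $s_{i_1}\cdots s_{i_N}$ of $w_0$. The central observation is that each $T_i$ is an algebra automorphism of the \emph{full} quantised enveloping algebra $U_q(\g)$, so all four statements should be reduced to facts that become transparent after applying suitable $T_i$'s to move a given root vector into one of the simple root vectors $E_{i_k}$, for which everything is explicit.

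For part (1), I would argue that $E_{\beta_k}\in\U$ by induction on $k$, using the standard fact that if $s_{i_1}\cdots s_{i_{k-1}}(\alpha_{i_k})$ is a positive root then the composite $T_{i_1}\cdots T_{i_{k-1}}$ sends $E_{i_k}$ into the positive part $\U$ rather than producing $F$-terms; concretely, one checks that $T_i(\U)\subseteq \U$ whenever $s_i$ keeps the relevant positive root positive, which is governed by the combinatorics of the reduced word (each partial product $s_{i_1}\cdots s_{i_{k-1}}$ is reduced, so $\alpha_{i_k}$ is not sent to a negative root). Part (2) is essentially a definitional check: if $\beta_k=\alpha_i$ then the initial segment of the word fixes $\alpha_i$ as a positive root, and one verifies directly from the formulas for $T_i$ that the product of braid operators acts as the identity on $E_i$, giving $E_{\beta_k}=E_i$. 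The statement that $\{\beta_1,\dots,\beta_N\}$ enumerates $R^+$ is the classical convexity property of reduced words in $W$ and I would simply cite it.

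For part (3), the PBW basis property, the natural route is a dimension/filtration argument: one shows that the ordered monomials $E_{\beta_1}^{k_1}\cdots E_{\beta_N}^{k_N}$ span $\U$ using the commutation relations of part (4) to rewrite any product into this ordered form, and then proves linear independence by comparing with the known Hilbert series of $\U$ graded by $Q^+$ (each homogeneous component of weight $\beta$ has dimension equal to the number of ways of writing $\beta$ as a nonnegative combination of positive roots, i.e.\ a Kostant partition count), which forces the spanning set to be a basis. Part (4), the Levendorskii--Soibelman straightening relations, is the technical heart: one establishes that $E_{\beta_j}E_{\beta_i}-q^{-(\beta_i,\beta_j)}E_{\beta_i}E_{\beta_j}$ lies in the subalgebra generated by the intermediate root vectors $E_{\beta_{i+1}},\dots,E_{\beta_{j-1}}$ with the correct leading $q$-commutator. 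I would prove this by applying the automorphism $T_{i_1}^{-1}\cdots T_{i_{i-1}}^{-1}$ to reduce to the case $i=1$ and $\beta_i=\alpha_{i_1}$, where the relation can be checked against the explicit action of $T_{i_1}$ on the generators and the quantum Serre relations~(\ref{Serrequantique}); the scalar $q^{-(\beta_i,\beta_j)}$ appears because $T_i$ preserves the pairing and rescales weight vectors by the corresponding power of $q$.

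The hard part will be part (4): controlling precisely which intermediate terms appear and verifying that no $E_{\beta_l}$ with $l\le i$ or $l\ge j$ contaminates the right-hand side. The cleanest way to tame this is to endow $\U$ with the $\mathbb{N}^N$-grading coming from the multidegrees in the PBW monomials and show that the braid operators interact with this grading in a triangular fashion, so that the defining relations propagate correctly under the inductive reduction to a rank-two computation inside the rank-two parabolic generated by $s_{i_k}$ and its neighbour. Since all four parts are classical and due to Lusztig and Levendorskii--Soibelman, in the write-up I would state the reduction strategy and then refer to \cite[Chapter 37]{lusztigbook}, \cite[Chapter 8]{jantzen} and \cite[I.6.7]{bg} for the detailed verifications rather than reproducing them.
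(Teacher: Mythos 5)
Your proposal is, in substance, what the paper itself does: the paper gives no proof of Theorem~\ref{theofond}, attributing it to Lusztig and Levendorskii--Soibelman and citing \cite[Chapter 37]{lusztigbook}, \cite[Chapter 8]{jantzen} and \cite[I.6.7]{bg}, which is exactly where your write-up would also terminate. Your sketched reductions (positivity of $T_{i_1}\cdots T_{i_{k-1}}(E_{i_k})$ when the partial word keeps $\alpha_{i_k}$ positive, spanning via the straightening relations together with a graded dimension count, and the reduction of the Levendorskii--Soibelman relations to a rank-two computation) are consistent with how those references establish the result.
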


As a consequence of this result, $\U$ can be presented as a skew-polynomial algebra: 
$$\U= \comp [E_{\beta_1}] [E_{\beta_2}; \sigma_2 , \delta_2] \cdots [E_{\beta_N}; \sigma_N , \delta_N],$$
 where each $\sigma_i$ is a linear automorphism and each $\delta_i$ is a $\sigma_i$-derivation of the appropriate 
 subalgebra. In particular, $\U$ is a noetherian domain and its group of invertible elements is reduced to nonzero complex numbers.

\subsection{Prime and primitive spectra of $\U$.}
\label{sectionHstratification}

We denote by $\spec(\U)$ the set of prime ideals of $\U$. First, as $q$ is not a root of unity, it was proved by Ringel \cite{ringel} (see also \cite[Theorem 2.3]{GoodearlLetzter}) that, as in the classical situation, every prime ideal of $\U$ is completely prime.

In order to study the prime and primitive spectra of $\U$, we will use the stratification theory developed by Goodearl and Letzter. This theory allows the construction of a partition of these two sets by using the action of a suitable torus on $\U$. More precisely, the torus $\hc:=(\mathbb{C}^*)^n$ acts naturally by automorphisms on $\U$ via:
$$(h_1,\dots, h_n).E_i = h_i E_i \mbox{ for all } i \in \{1,\dots,n\}.$$
(It is easy to check that the quantum Serre relations are preserved by the group $\hc$.) Recall (see \cite[3.4.1]{andrusdumas}) that this action is rational. (We refer the reader to \cite[II.2.]{bg} for the defintion of a rational action.) A non-zero element $x$ of $\U$ is an {\it $\hc$-eigenvector of $\U$} if 
$h.x \in \mathbb{C}^{*}x$ for all $h \in \hc$. An ideal $I$ of $\U$ is {\it $\hc$-invariant} if $h.I=I$ for all $h \in \hc$. We denote by {\it $\hc$-$\spec(\U)$} the set of all $\hc$-invariant prime ideals of $\U$. It turns out that this is a finite set by a theorem of Goodearl and Letzter about iterated Ore extensions, see \cite[Proposition 4.2]{GoodLet}. In fact, one can be even more precise in our situation. Indeed, in \cite{gorelik}, Gorelik has also constructed a stratification of the prime spectrum of $\U$ using tools coming from representation theory. It turns out that her stratification coincides with the $\hc$-stratification, so that we deduce from \cite[Corollary 7.1.2]{gorelik} that 

\begin{prop}[Gorelik]
\label{hpremier}
 $\U$ has exactly $|W|$ $\hc$-invariant prime ideals. 
\end{prop}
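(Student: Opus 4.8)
The plan is to turn this finiteness-with-exact-count statement into a combinatorial problem by exploiting the iterated Ore extension presentation furnished by Theorem \ref{theofond}, and then to match the resulting combinatorial data with the Weyl group $W$. Goodearl--Letzter already guarantee that $\hc$-$\spec(\U)$ is finite; the real content is the value $|W|$.

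First I would check that $\U$, together with the $\hc$-action described above, is an iterated Ore extension of the type to which Cauchon's deleting-derivations theory applies (a CGL extension). Indeed, the commutation relations of Theorem \ref{theofond}(4) show that in the skew-polynomial presentation $\U = \comp[E_{\beta_1}][E_{\beta_2};\sigma_2,\delta_2]\cdots[E_{\beta_N};\sigma_N,\delta_N]$ each automorphism acts on the earlier generators by $\sigma_j(E_{\beta_i}) = q^{-(\beta_i,\beta_j)}E_{\beta_i}$ for $i<j$, while the correction terms $\delta_j(E_{\beta_i})$ lie in the subalgebra generated by $E_{\beta_{i+1}},\dots,E_{\beta_{j-1}}$. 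Since $q$ is not a root of unity and each $E_{\beta_k}$ is a $Q$-homogeneous $\hc$-eigenvector of weight $\beta_k$, the torus acts rationally with the generators as eigenvectors, the $\sigma_j$ and $\delta_j$ are $\hc$-equivariant, and the $\delta_j$ are locally nilpotent (a standard consequence of the $Q$-grading and the form of the relations in Theorem \ref{theofond}(4)). This places us squarely in Cauchon's framework.

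Next I would invoke Cauchon's deleting-derivations algorithm to produce a canonical injection of $\hc$-$\spec(\U)$ into the $\hc$-prime spectrum of a quantum affine space $\comp_{\mathbf{q}}[Y_1,\dots,Y_N]$ on the same number of variables. The content of this theory is that the image consists precisely of the $\hc$-primes of the quantum affine space indexed by the \emph{admissible} (Cauchon) diagrams, i.e. the subsets of $\{1,\dots,N\}$ satisfying the admissibility condition dictated by the algorithm. Consequently the finiteness already known becomes an exact count: the number of $\hc$-invariant primes of $\U$ equals the number of Cauchon diagrams attached to the chosen reduced decomposition of $w_0$.

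The main obstacle is then purely combinatorial: to show that the number of admissible diagrams is exactly $|W|$. Here I would construct an explicit bijection between the Cauchon diagrams associated to the reduced word $s_{i_1}\cdots s_{i_N}$ and the elements of $W$, reading a diagram as a choice of subword and using the subword characterisation of Bruhat order to pin down a unique Weyl group element for each admissible diagram. Verifying that this correspondence is well defined and bijective for every simple $\g$ is the delicate point, since admissibility is defined recursively through the algorithm rather than intrinsically. An alternative and more conceptual route would be to pass to the semiclassical limit and identify the $\hc$-strata with the torus-orbits of symplectic leaves of the associated Poisson structure on $\mathfrak{n}^+$, which are the Bruhat-type cells and hence are indexed by $W$. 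Either way, once the count $|W|$ is established the proposition follows.
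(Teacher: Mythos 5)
Your route is genuinely different from the paper's, so it is worth saying first what the paper actually does: it does not prove this proposition at all, but quotes it from Gorelik \cite{gorelik}. Her Corollary 7.1.2 is obtained by representation-theoretic tools (a stratification of $\spec(\U)$ in the spirit of Joseph's work, which is then observed to coincide with the Goodearl--Letzter $\hc$-stratification), and the count $|W|$ comes out of that theory. You propose instead a ring-theoretic and combinatorial argument through the theory of deleting derivations. The first half of your plan is sound: the PBW presentation of Theorem \ref{theofond} does exhibit $\U$ as a CGL extension (the Levendorskii--Soibelman relations give $\sigma_j(E_{\beta_i})=q^{-(\beta_i,\beta_j)}E_{\beta_i}$, the $\delta_j$ are locally nilpotent by the $Q^+$-grading, and the generators are $\hc$-eigenvectors for a rational action), so Cauchon's theory applies and identifies $\hc$-$\spec(\U)$ with the set of admissible diagrams attached to the chosen reduced word of $w_0$.

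The genuine gap sits exactly where you flag ``the delicate point.'' The assertion that the admissible diagrams for a reduced decomposition of $w_0$ are in bijection with the elements of $W$ is not a verification one can carry out by inspection; it is a substantial theorem in its own right, precisely because admissibility is defined recursively through the algorithm rather than intrinsically. This bijection (admissible diagrams for the quantum Schubert cell algebra of $w$ correspond, via positive subwords, to the Weyl group elements $u\leq w$ in the Bruhat order) was only established later, in work of M\'eriaux and Cauchon, and by Yakimov through different methods --- several years after the present paper. So your argument, as written, replaces the cited result of Gorelik by another unproved theorem of at least comparable depth, and the count $|W|$ is never actually established. Your alternative via the semiclassical limit has the same defect: there is no general transfer principle identifying the number of $\hc$-invariant primes of $\U$ with the number of torus orbits of symplectic leaves of the Poisson structure on $\mathfrak{n}^+$, so that route is a heuristic, not a proof. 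In short, the strategy is viable and was eventually carried out in the literature, but the proposal leaves the central step unproven.
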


The action of $\hc$ on $\U$ allows via the $\hc$-stratification
theory of Goodearl and Letzter (see \cite[II.2]{bg}) the construction
of a partition of $\spec(\U)$ as follows. If $J$ is an $\hc$-invariant 
prime ideal of $\U$, we denote by $\spec_J(\U)$ the
{\it $\hc$-stratum of $\spec(\U)$ associated to $J$}. Recall that
$\spec_J(\U):=\{ P \in \spec(\U) \mid \bigcap_{h \in \hc} h.P=J
\}$. 
Then the $\hc$-strata $\spec_J(\U)$ ($J \in \hc$-$\spec(\U)$) form a partition of $\spec(\U)$ (see \cite[II.2]{bg}):
$$\spec(\U) = \bigsqcup_{J \in \hc \mbox{-}\spec(\U)} \spec_J(\U).$$
Naturally, this partition induces a partition of the set $\prim(\U)$ of all (left) primitive ideals of $\U$ as follows. 
For all $J \in \hc$-$\spec(\U)$, we set $\prim_J(\U):=\spec_J(\U) \cap \prim(\U)$. Then it is obvious that 
the $\hc$-strata $\prim_J(\U)$ ($J \in \hc$-$\spec(\U)$) form a partition of $\prim(\U)$:
$$\prim(\U) = \bigsqcup_{J \in \hc \mbox{-}\spec(\U)} \prim_J(\U).$$
More interestingly, because of the finiteness of the set of $\hc$-invariant prime ideals of $\U$, the $\hc$-stratification theory provides a useful tool to recognise primitive ideals without having to find all its irreductible representations! Indeed, following previous works of Hodges-Levasseur, Joseph, and Brown-Goodearl, Goodearl and Letzter have characterised the primitive ideals of $\U$ as follows, see \cite[Corollary 2.7]{GoodLet} or \cite[Theorem II.8.4]{bg}. 

\begin{theo}[Goodearl-Letzter]
\label{theoprim}
$\prim_J(\U)$ $(J \in \hc$-$\spec(\U)$) coincides with those primes in $\spec_J(\U)$ that are maximal in $\spec_J(\U)$. 
\end{theo}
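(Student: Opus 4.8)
The plan is to exploit the Goodearl--Letzter structure theorem attached to the $\hc$-stratification, which reduces the statement to a computation in a commutative Laurent polynomial ring. First I would fix $J \in \hc$-$\spec(\U)$ and pass to the quotient $A:=\U/J$, which is an $\hc$-simple noetherian domain. The set $\mathcal{E}$ of nonzero $\hc$-eigenvectors of $A$ is then a (left and right) Ore set, so one may form the localisation $A_J:=A[\mathcal{E}^{-1}]$. The crucial input, valid because the $\hc$-action is rational and $\comp$ is the (uncountable, algebraically closed) base field, is that the centre $Z_J:=Z(A_J)$ is a commutative Laurent polynomial ring $\comp[z_1^{\pm 1},\dots,z_d^{\pm 1}]$ over $\comp$.

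Next I would recall the homeomorphism that lies at the heart of the theory: contraction and extension of ideals along $A \to A_J$ and $A_J \to Z_J$ induce mutually inverse, inclusion-preserving bijections
$$\spec_J(\U) \longleftrightarrow \spec(A_J) \longleftrightarrow \spec(Z_J),$$
where the primes of $A$ lying over $J$ are exactly those disjoint from $\mathcal{E}$, and every prime of $A_J$ is the extension of its contraction to the centre $Z_J$ (this last point again uses rationality). Because the whole correspondence preserves inclusions, it carries the primes maximal in $\spec_J(\U)$ bijectively onto the maximal ideals $\mx(Z_J)$ of the commutative ring $Z_J$, and conversely.

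It then remains to identify the primitive ideals in the stratum with the same set $\mx(Z_J)$, and this Dixmier--Moeglin-type analysis is the real obstacle. One shows that a prime $P \in \spec_J(\U)$ is primitive if and only if its image in $Z_J$ is a maximal ideal: the forward direction uses that $\U$ satisfies the Nullstellensatz over the uncountable field $\comp$, so that primitive ideals are rational; the converse uses that a maximal ideal $\mathfrak{m}$ of $Z_J$ has residue field $\comp$ (by the Nullstellensatz for the affine algebra $Z_J$), whence the corresponding prime of $A_J$, pulled back to $A$, is realised as the annihilator of a simple module. In both directions the key is that rationality of a prime in the stratum is faithfully detected by the behaviour of its contraction to the central Laurent polynomial ring $Z_J$.

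Finally I would assemble the two identifications. Since $Z_J$ is commutative, its maximal ideals are precisely the maximal elements of $\spec(Z_J)$; transporting this back through the homeomorphism shows that the primes maximal in $\spec_J(\U)$ are exactly $\mx(Z_J)$, which by the previous paragraph are exactly the primitive ideals lying in $\spec_J(\U)$. Hence $\prim_J(\U)$ coincides with the set of primes maximal in $\spec_J(\U)$, as claimed. The main difficulty throughout is the construction of the central Laurent polynomial ring $Z_J$ together with the verification that primitivity is mirrored by maximality of the contracted ideal; once these are in place, the topological bookkeeping with inclusions is routine.
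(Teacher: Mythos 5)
The first thing to note is that the paper contains no proof of Theorem \ref{theoprim} at all: it is quoted verbatim from Goodearl--Letzter, with references to \cite[Corollary 2.7]{GoodLet} and \cite[Theorem II.8.4]{bg}. So the only meaningful comparison is with the proof in those sources, and your outline does reproduce its skeleton: localise $A=\U/J$ at the multiplicative set of nonzero $\hc$-eigenvectors, use rationality of the action to identify the centre $Z_J$ of the localisation $A_J$ as a Laurent polynomial ring, and use the inclusion-preserving bijections $\spec_J(\U)\leftrightarrow\spec(A_J)\leftrightarrow\spec(Z_J)$ to translate ``maximal in the stratum'' into ``maximal ideal of $Z_J$''. (A minor slip along the way: $\U/J$ is $\hc$-prime, not $\hc$-simple; $\hc$-simplicity is precisely what the localisation at eigenvectors buys you, and it is needed for the bijection with $\spec(Z_J)$.)

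The genuine gap is in the direction ``maximal in the stratum $\Rightarrow$ primitive'', which you yourself identify as the real obstacle but do not actually close. From a maximal ideal $\mathfrak{m}$ of $Z_J$ with residue field $\comp$ you obtain that $A_J/\mathfrak{m}A_J$ is a simple ring, hence primitive; but a faithful simple module over this localisation is merely a faithful $\U/P$-module, and there is no reason for it to be simple over $\U/P$. Primitivity does not descend along Ore localisation for free, so you have not exhibited $P$ as the annihilator of a simple $\U$-module. The argument in the cited sources closes this step by a different mechanism, and it is here that the finiteness of $\hc$-$\spec(\U)$ (Proposition \ref{hpremier}, which your proposal never invokes) is essential: finiteness makes each stratum $\spec_J(\U)$ a locally closed subset of $\spec(\U)$, so a prime maximal in its stratum is a locally closed point; then the Nullstellensatz makes $\U$ a Jacobson ring, and writing $P$ as an intersection of primitive ideals forces one of them to equal $P$. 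The paper itself flags exactly this finiteness as the key enabling fact in the sentence preceding the theorem; without it (or some substitute for the local closedness of strata), your converse direction does not go through.
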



\section{Automorphism group of $U_q^+(\g)$.}

In this section, we investigate the automorphism group of $\U$ viewed as the algebra generated by $n$ indeterminates $E_1, \dots , E_n$ subject to the quantum Serre relations. This algebra has some well-identified automorphisms. 
First, there are the so-called torus automorphisms; let $\mathcal{H}=(\comp^{*})^{n}$, where $n$ still denotes the rank of $\g$. As $\U$ is the $\comp$-algebra generated by $n$ indeterminates subject to the quantum Serre relations, it is easy to check that each $\bar{\lambda}=(\lambda_{1},\dots, , \lambda_{n})\in\mathcal{H}$ determines an algebra  automorphism $\phi_{\bar{\lambda}}$ of $\U$  with $\phi_{\bar {\lambda}}(E_{i})=\lambda_{i}E_{i}$ for $i \in \{1, \dots, n\}$, with inverse
$\phi^{-1}_{\bar{\lambda}}=\phi_{\bar{\lambda}^{-1}}$. Next, there are the so-called diagram automorphisms coming from the symmetries of the Dynkin diagram of $\g$. Namely, let $w$ be an automorphism of the Dynkin diagram of $\g$, that is, $w$ is an element of the symmetric group $S_n$ such that $(\alpha_{i}, \alpha_{j})=(\alpha_{w(i)}, \alpha_{w(j)})$ for all $i,j \in \{1, \dots ,n\}$. Then one defines an automorphism, also denoted $w$, of $\U$ by: $w(E_i)=E_{w(i)}$. Observe that 
$$\phi_{\bar{\lambda}} \circ w = w \circ \phi_{(\lambda_{w(1)},\dots, , \lambda_{w(n)})}.$$

We denote by $G$ the subgroup of $\aut(\U)$ generated by the torus automorphisms and the diagram automorphisms. Observe that 
$$G \simeq \mathcal{H} \rtimes \mathrm{autdiagr}(\g),$$
where $\mathrm{autdiagr}(\g)$ denotes the set of diagram automorphisms of $\g$.

The group $\aut(\Us)$ was computed independently by Alev and Dumas, see \cite[Proposition 2.3]{alevdumasrigidite} , and Caldero, see \cite[Proposition 4.4]{caldero}; their results show that, in the case where $\g=\mathfrak{sl}_3$, we have 
$$\aut(\Us) = G.$$
About ten years later, Andruskiewitsch and Dumas investigated the case where $\g=\mathfrak{so}_5$, see \cite{andrusdumas}. In this case, they obtained partial results that lead them to the following conjecture.\\ \\
\\{\bf Conjecture (Andruskiewitsch-Dumas, \cite[Problem 1]{andrusdumas}):}
$$\aut(\U) =G.$$
$ $

This conjecture was recently confirmed in two new cases: $\g=\mathfrak{so}_5$, \cite{launoisB2}, and $\g=\mathfrak{sl}_4$, \cite{launoislopes}. Our aim in this section 
is to show how one can use the action of the automorphism group of $\U$ on the primitive spectrum of this algebra in order to prove the Andruskiewitsch-Dumas Conjecture in the cases where $\g=\mathfrak{sl}_3$ and $\g=\mathfrak{so}_5$. 

\subsection{Normal elements of $\U$.}

Recall that an element $a$ of $\U$ is normal provided the left and right ideals generated by $a$ in $\U$ coincide, that is, if $$a\U = \U a.$$

In the sequel, we will use several times the following well-known  result concerning normal elements of $\U$.

\begin{lem}
\label{utile}
Let $u$ and $v$ be two nonzero normal elements of $\U$ such that $\ideal{u}=\ideal{v}$. Then
there exist $\lambda,\mu \in \comp^*$ such that $u=\lambda v $ and $v =\mu u$.
\end{lem}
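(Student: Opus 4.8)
The plan is to exploit the fact that $\langle u\rangle = \langle v\rangle$ as two-sided ideals, together with the special structure of $\U$ recorded in Section~\ref{sectionPBW}: namely that $\U$ is a noetherian domain whose group of units consists precisely of the nonzero scalars $\comp^*$. First I would use the hypothesis $u \in \langle v\rangle = \U v$ (using normality of $v$, the left, right, and two-sided ideals generated by $v$ all coincide, so $\langle v\rangle = \U v = v\U$). Thus I may write $u = av$ for some $a \in \U$. Symmetrically, from $v \in \langle u\rangle = \U u$ I obtain $v = bu$ for some $b \in \U$.

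Substituting one relation into the other gives $u = av = a(bu) = (ab)u$, and since $\U$ is a domain and $u \neq 0$, I can cancel $u$ on the right to deduce $ab = 1$. Likewise $v = bu = b(av) = (ba)v$ yields $ba = 1$. Hence $a$ and $b$ are mutually inverse, so $a$ is a unit of $\U$.

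Now I invoke the key structural fact, established as a consequence of Theorem~\ref{theofond}, that the group of invertible elements of $\U$ is reduced to $\comp^*$. Therefore $a = \lambda$ for some $\lambda \in \comp^*$, giving $u = \lambda v$; and $b = a^{-1} = \lambda^{-1} =: \mu \in \comp^*$, giving $v = \mu u$. This is exactly the required conclusion.

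The argument is short and the only genuine content is the identification of the unit group of $\U$ with $\comp^*$, which is already supplied by the excerpt. The one point demanding a little care is the bookkeeping around normality: I must make sure that $\langle u\rangle = \langle v\rangle$ legitimately lets me factor $u$ as a left multiple of $v$ (and vice versa), which is precisely why the normality of $u$ and $v$ is needed — it guarantees $\langle v\rangle = \U v$, so that membership $u \in \langle v\rangle$ really does produce a factorization $u = av$ rather than merely an expression as a sum of products $\sum c_i v d_i$. I expect no further obstacle; the cancellation steps are immediate in a domain.
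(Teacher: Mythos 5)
Your proof is correct and follows essentially the same route as the paper: the paper's own proof simply asserts that units $\lambda,\mu$ with $u=\lambda v$, $v=\mu u$ obviously exist and then invokes the fact that the unit group of $\U$ is $\comp^*$. Your factorization $u=av$, $v=bu$ and the cancellation argument in the domain merely fill in the step the paper calls obvious, so the two arguments coincide in substance.
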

\begin{proof}
It is obvious that units $\lambda,\mu$ exist with these
properties. However, the set of units of $\U$ is precisely $\comp^*$.
\end{proof}

\subsection{$\mathbb{N}$-grading on $\U$ and automorphisms.}\label{S:aut:ngr}

As the quantum Serre relations are homogeneous in the given  generators, there is an $\mathbb{N}$-grading on $\U$ obtained by assigning to  $E_{i}$ degree $1$. Let
\begin{equation}\U=\bigoplus_{i\in\mathbb{N}}\U_{i}\end{equation}
be the corresponding decomposition, with $\U_{i}$ the subspace of  homogeneous elements of degree $i$. In particular, $\U_{0}=\comp$ and $\U_ {1}$ is the $n$-dimensional space spanned by the generators $E_{1}, \dots, E_{n}$. For $t\in\mathbb{N}$ set $\U_{\geq t}=\bigoplus_{i\geq t}\U_{i}$  and define   $\U_{\leq t}$ similarly.

We say that the nonzero element $u\in\U$ has degree $t$, and write $ \mathrm{deg} (u)=t$, if $u\in \U_{\leq t}\setminus \U_{\leq t-1}$  (using the convention that $\U_{\leq -1}=\{ 0 \}$). As $\U$ is a domain, $\mathrm{deg} (uv)=\mathrm{deg} (u)+\mathrm {deg} (v)$ for $u, v\neq 0$.

\begin{defi}
Let $A=\bigoplus_{i\in\mathbb{N}}A_{i}$ be an $\mathbb{N}$-graded $\comp$-algebra with  $A_{0}=\comp$ which is generated as 
an algebra by $A_{1}=\comp x_{1}\oplus \cdots\oplus\comp x_{n}$. If for each $i\in\{ 1, \dots , n \}$  there exist 
$0\neq a\in A$ and a scalar $q_{i, a}\neq 1$ such that $x_{i}a=q_{i, a}ax_{i}$, then we say that $A$ is an 
$\mathbb{N}$-graded algebra with enough $q$-commutation relations.
\end{defi}

The algebra $\U$, endowed with the grading just defined, is a connected $\mathbb{N}$-graded algebra with enough $q$-commutation relations. Indeed, if $i \in \{1, \dots ,n \}$, then there exists $u \in \U$ such that $E_i u = q^{\bullet} u E_i$ where $\bullet$ is a nonzero integer. This can be proved as follows. As $\g$ is simple, there exists an index $j \in \{ 1, \dots ,n\}$ such that $j \neq i$ and $a_{ij} \neq 0$, that is, $a_{ij} \in \{-1,-2,-3\}$. Then $s_i s_j$ is a reduced expression in $W$, 
so that one can find a reduced expression of $w_0$ starting with $s_i s_j$, that is, one can write 
$$w_0 = s_i s_j s_{i_3} \dots s_{i_N}.$$ 
With respect to this reduced expression of $w_0$, we have with the notation of Section \ref{sectionPBW}:
$$\beta_1= \alpha_i \ \ \mbox{ and } \ \ \beta_2=s_i(\alpha_j)= \alpha_j - a_{ij}\alpha_i$$
Then it follows from Theorem \ref{theofond} that $E_{\beta_1}= E_i$, $E_{\beta_2}=E_{\alpha_j - a_{ij}\alpha_i}$ and 
$$E_i E_{\beta_2} =q^{(\alpha_i , \alpha_j - a_{ij}\alpha_i)}E_{\beta_2} E_i,$$
that is,
$$E_i E_{\beta_2} =q^{-(\alpha_i , \alpha_j)}E_{\beta_2} E_i.$$
As $a_{ij} \neq 0$, we have $(\alpha_i , \alpha_j) \neq 0$ and so $q^{-(\alpha_i , \alpha_j)} \neq 1$ 
since $q$ is not a root of unity. So we have just proved:

\begin{prop}
\label{propUngraded}
$\U$ is a connected $\mathbb{N}$-graded algebra with enough $q$-commutation relations.
\end{prop}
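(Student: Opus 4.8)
The plan is to verify the three clauses of the definition separately, observing at the outset that only the ``enough $q$-commutation relations'' clause carries any content. For the first two, the grading $\U = \bigoplus_{i \in \mathbb{N}} \U_i$ that places each generator $E_i$ in degree $1$ is visibly connected with $\U_0 = \comp$, and $\U$ is generated as an algebra by $\U_1 = \comp E_1 \oplus \cdots \oplus \comp E_n$; both assertions are immediate from the presentation of $\U$ by the homogeneous quantum Serre relations, so I would dispatch them in a sentence. The substantive task is then to produce, for each fixed $i \in \{1, \dots, n\}$, a nonzero element $a \in \U$ and a scalar $q_{i,a} \neq 1$ with $E_i a = q_{i,a}\, a E_i$.

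To manufacture such an $a$, I would lean on the simplicity of $\g$ together with the straightening relation of Theorem \ref{theofond}(4). Since the Dynkin diagram of $\g$ is connected, I can choose an index $j \neq i$ with $a_{ij} \neq 0$; then $s_i s_j$ is reduced in $W$, and I would extend it to a reduced expression $w_0 = s_i s_j s_{i_3} \cdots s_{i_N}$ of the longest element. With respect to this word, the first two of the roots $\beta_1, \dots, \beta_N$ attached to the reduced expression as in Section~\ref{sectionPBW} are $\beta_1 = \alpha_i$ and $\beta_2 = s_i(\alpha_j) = \alpha_j - a_{ij}\alpha_i$, and Theorem \ref{theofond}(2) identifies $E_{\beta_1} = E_i$.

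The crux is then to apply Theorem \ref{theofond}(4) to the pair of indices $(1,2)$. Since these are adjacent, there are no intermediate roots, so the correction sum on the right-hand side is empty and the relation reduces to a clean $q$-commutation, which I would rearrange into
$$E_i E_{\beta_2} = q^{(\alpha_i,\beta_2)} E_{\beta_2} E_i.$$
A routine inner-product computation, using $(\alpha_i,\alpha_j) = \tfrac{1}{2} a_{ij}(\alpha_i,\alpha_i)$, collapses the exponent to $(\alpha_i,\beta_2) = -(\alpha_i,\alpha_j)$. Setting $a = E_{\beta_2}$ and $q_{i,a} = q^{-(\alpha_i,\alpha_j)}$ then yields the required relation.

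The one point that genuinely needs care --- and the only place I expect any obstacle, albeit a mild one --- is checking that $q_{i,a} \neq 1$. Here I would argue that $a_{ij} \neq 0$ forces $(\alpha_i,\alpha_j)$ to be a nonzero integer, so that $q^{-(\alpha_i,\alpha_j)}$ is a nonzero integer power of $q$; since $q$ is not a root of unity, this power cannot equal $1$. As $i$ was arbitrary, this completes the verification and establishes the proposition.
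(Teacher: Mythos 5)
Your proof is correct and takes essentially the same route as the paper's: the paper likewise uses simplicity of $\g$ to pick $j \neq i$ with $a_{ij} \neq 0$, extends $s_i s_j$ to a reduced expression of $w_0$, and reads off from Theorem \ref{theofond} the relation $E_i E_{\beta_2} = q^{-(\alpha_i,\alpha_j)} E_{\beta_2} E_i$, concluding the exponent is a nonzero integer so the scalar is not $1$ because $q$ is not a root of unity.
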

   
One of the advantages of $\mathbb{N}$-graded algebras with enough $q$-commutation relations is that any automorphism of such an algebra must conserve the valuation associated to the $\mathbb{N}$-graduation. More precisely, as $\U$ is a connected $\mathbb{N}$-graded algebra with enough $q$-commutation relations, we deduce from \cite{launoislopes} (see also \cite[Proposition 3.2]{LaunoisLenagan2005}) the following result.

\begin{cor}\label{C:aut:ngr}
Let $\sigma\in\aut(\U) $ and $x\in \U_{d}\setminus\{ 0 \}$. Then $\sigma (x) =y_{d}+y_{>d}$, for some
$y_{d}\in \U_{d}\setminus\{ 0 \}$ and $y_{>d}\in \U_{\geq d+1}$.
\end{cor}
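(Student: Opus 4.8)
The plan is to show that every $\sigma \in \aut(\U)$ preserves the filtration $(\U_{\geq t})_{t \in \mathbb{N}}$ exactly; the stated corollary is then an immediate reformulation. For a nonzero $u \in \U$ write $v(u)$ for the least integer $i$ such that the homogeneous component of $u$ in degree $i$ is nonzero (the valuation attached to the $\mathbb{N}$-grading). Since $\U$ is a connected $\mathbb{N}$-graded domain, $v$ is additive, $v(uw) = v(u) + v(w)$ for nonzero $u, w$, because the lowest-degree component of $uw$ is the product of the lowest-degree components of $u$ and $w$, which is nonzero as $\U$ has no zero divisors. The whole argument reduces to the single claim that $v(\sigma(E_i)) \geq 1$ for each generator, i.e. that $\sigma(E_i)$ has no constant term.

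First I would establish this claim, which is where Proposition \ref{propUngraded} (enough $q$-commutation relations) is indispensable. Fix $i$ and choose, as provided by that proposition, a nonzero $a \in \U$ and a scalar $\lambda \neq 1$ with $E_i a = \lambda a E_i$. Applying $\sigma$ gives $f b = \lambda b f$, where $f := \sigma(E_i)$ and $b := \sigma(a) \neq 0$. Writing $f = c + f'$ with $c \in \comp$ the constant term and $f' \in \U_{\geq 1}$, expanding $f b - \lambda b f = 0$ yields $(1-\lambda) c\, b = \lambda b f' - f' b$. The right-hand side has valuation at least $v(b) + v(f') \geq v(b) + 1$, whereas if $c \neq 0$ the left-hand side has valuation exactly $v(b)$ (since $(1-\lambda)c \neq 0$); these cannot agree, so $c = 0$. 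This rigidity---impossible in, say, a commutative polynomial algebra, where one may send $E_i \mapsto E_i + 1$---is precisely what the $q$-commutation hypothesis buys, and I expect proving it to be the one genuinely delicate point.

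Granting the claim, I would promote it to the filtration as follows. Because $\U$ is generated in degree $1$, one has $\U_{\geq d} = (\U_{\geq 1})^d$ for every $d$; and since every element of $\U_{\geq 1}$ is a sum of products of at least one generator, each mapped by $\sigma$ into $\U_{\geq 1}$, we get $\sigma(\U_{\geq 1}) \subseteq \U_{\geq 1}$. Hence $\sigma(\U_{\geq d}) = \sigma((\U_{\geq 1})^d) = (\sigma(\U_{\geq 1}))^d \subseteq (\U_{\geq 1})^d = \U_{\geq d}$. Applying the same conclusion to $\sigma^{-1}$ and composing gives $\sigma(\U_{\geq d}) = \U_{\geq d}$ for all $d$, so $\sigma$ is a filtered automorphism.

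Finally, for $x \in \U_d \setminus \{0\}$ we have $x \in \U_{\geq d}$, hence $\sigma(x) \in \U_{\geq d}$, and we may write $\sigma(x) = y_d + y_{>d}$ with $y_d \in \U_d$ and $y_{>d} \in \U_{\geq d+1}$. If $y_d$ were $0$, then $\sigma(x) \in \U_{\geq d+1}$, whence $x = \sigma^{-1}(\sigma(x)) \in \sigma^{-1}(\U_{\geq d+1}) = \U_{\geq d+1}$, contradicting $0 \neq x \in \U_d$. Therefore $y_d \neq 0$, as required. Apart from the constant-term claim, every step is a formal consequence of connectedness, generation in degree $1$, and the domain property.
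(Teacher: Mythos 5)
Your proof is correct and takes essentially the same approach as the paper: the paper does not prove Corollary \ref{C:aut:ngr} inline but deduces it from Proposition \ref{propUngraded} together with the cited results \cite{launoislopes} and \cite[Proposition 3.2]{LaunoisLenagan2005}, and your argument — using the $q$-commutation relation $E_i a = \lambda a E_i$ with $\lambda \neq 1$ to kill the constant term of $\sigma(E_i)$, then bootstrapping via generation in degree one and the same statement for $\sigma^{-1}$ to get $\sigma(\U_{\geq d}) = \U_{\geq d}$ — is precisely the content of those references. In effect you have correctly filled in the proof the paper delegates to the citations, using exactly the hypothesis (enough $q$-commutation relations) that the paper sets up for this purpose.
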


\subsection{The case where $\g=\mathfrak{sl}_3$.}
\label{paraheisenberg}

In this section, we investigate the automorphism group of $\U$ in the case where $\g=\mathfrak{sl}_3$. In this case the Cartan matrix is $A= \left( \begin{array}{cc} 2 & -1 \\ -1 & 2 \end{array} \right)$, so that $ \Us$ is the $\mathbb{C}$-algebra generated by two indeterminates $E_1$ and $E_2$ subject to the following relations:
\begin{eqnarray}
 & & E_1^2 E_2   - (q + q^{-1} ) E_1 E_2 E_1 + E_2 E_1^2 = 0 \\
 & & E_2^2 E_1   - (q + q^{-1} ) E_2 E_1 E_2 + E_1 E_2^2 = 0
\end{eqnarray}
We often refer to this algebra as the quantum Heisenberg algebra, and sometimes we denote it by $\mathbb{H}$, as in the classical situation the enveloping algebra of $\mathfrak{sl}_3^+$ is the so-called Heisenberg algebra. 

We now make explicit a PBW basis of $\mathbb{H}$. The Weyl group of $\mathfrak{sl}_3$ is isomorphic to the symmetric group $S_3$, where $s_1$ is identified with the transposition $(1 \ 2)$ and $s_2$ is identified with $(2 \ 3)$. Its longest element is then $w_0= (1 3)$; it has two reduced decompositions: $w_0=s_1 s_2 s_1 =s_2 s_1 s_2$. Let us choose the reduced decomposition $s_1 s_2 s_1$ of $w_0$ in order to construct a PBW basis of $\Us$. According to Section \ref{sectionPBW}, this reduced decomposition leads to the following root vectors:
$$E_{\alpha_1}=E_1 \mbox{, } E_{\alpha_1 +\alpha_2}= T_1(E_2)=-E_1 E_2 +q^{-1} E_2 E_1 \mbox{ and } E_{\alpha_2}=T_1T_2(E_1)=E_2.$$
In order to simplify the notation, we set $E_3:= -E_1 E_2 +q^{-1} E_2 E_1$. Then, it follows from Theorem \ref{theofond} that
\begin{itemize}
\item The monomials $E_1^{k_1}E_3^{k_3}E_2^{k_2}$, with $k_1, k_2,k_3$ nonnegative integers, form a PBW-basis of $\Us$.   
\item $\mathbb{H}$ is the iterated Ore extension over $\mathbb{C}$ generated by the indeterminates $E_1,E_3,E_2$ subject to the following relations:
$$E_3 E_1 = q^{-1} E_1 E_3, \hspace{5mm} E_2 E_3 = q^{-1} E_3 E_2, \hspace{5mm} E_2 E_1 = q E_1 E_2 + q E_3.$$
In particular, $\mathbb{H}$ is a Noetherian domain, and its group of invertible elements is reduced to $\mathbb{C}^*$.
\item It follows from the previous commutation relations between the root vectors that $E_3$ is a normal element in $\mathbb{H}$, that is, $E_3 \mathbb{H}=\mathbb{H} E_3$.
\end{itemize}

In order to describe the prime and primitive spectra of $\mathbb{H}$, we need to introduce two other elements. 
The first one is the root vector $E_3':=T_2(E_1)=-E_2 E_1 + q^{-1} E_1 E_2$. This root vector would have appeared if we have  choosen the reduced decomposition $s_2 s_1 s_2$ of $w_0$ in order to construct a PBW basis  of $\mathbb{H}$. It follows from Theorem \ref{theofond} that $E_3'$ $q$-commutes with $E_1$ and $E_2$, so that 
$ E_3'$ is also a normal element of $\mathbb{H}$. Moreover, one can describe the centre of $\mathbb{H}$ using the two normal elements $E_3$ and $E_3'$. Indeed, in \cite[Corollaire 2.16]{alevdumasfractions}, Alev and Dumas have described the centre of $U_q^+({\mathfrak sl}_n)$; independently Caldero has described the centre of $\U$ for arbitrary $\g$, see \cite{calderocentre}. In our particular situation, their results show that the centre $Z(\mathbb{H})$ of $\mathbb{H}$ is a polynomial ring in one variable $Z(\mathbb{H})=\mathbb{C}[\Omega]$, where $\Omega =E_3 E_3'$.

We are now in position to describe the prime and primitive spectra of $\mathbb{H}=U_q^+(sl(3))$; this was first achieved by Malliavin who obtained the following picture for the poset of prime ideals of $\mathbb{H}$, see \cite[Th\'eor\`eme 2.4]{malliavin}:

$$\xymatrix{ \langle \langle E_1,E_2-\beta \rangle \rangle \ar@{-}[rdd]&& \langle \langle  E_1,E_2 \rangle \rangle \ar@{-}[ldd]\ar@{-}[rdd] & & \langle \langle E_1 -\alpha , E_2 \rangle \rangle \ar@{-}[ldd] \\ 
& & & &  \\
 & \langle  E_1 \rangle \ar@{-}[ldd]\ar@{-}[rrrdd]  & &   \langle  E_2 \rangle \ar@{-}[llldd]\ar@{-}[rdd]  &  \\
& & & &  \\
\langle \langle E_3 \rangle \rangle \ar@{-}[rrdd] & & \langle \langle \Omega- \gamma  \rangle \rangle \ar@{-}[dd] & & \langle \langle  E_3' \rangle \rangle \ar@{-}[lldd]   \\
& & & &  \\
& & \langle  0 \rangle   & & }$$
where $\alpha,\beta,\gamma \in \mathbb{C}^*$.

Recall from Section \ref{sectionHstratification} that the torus $\hc=(\comp^*)^2$ acts on $\Us$ by automorphisms and that the $\hc$-stratification theory of Goodearl and Letzter constructs a partition of the prime spectrum of $\Us$ into so-called $\hc$-strata, this partition being indexed by the $\hc$-invariant prime ideals of $\Us$. Using this description of $\spec (\Us )$, it is easy to identify the $6=|W|$ $\hc$-invariant prime ideals of $\mathbb{H}$ and their corresponding $\hc$-strata. 
As $E_1$, $E_2$, $E_3$ and $E_3'$ are $\hc$-eigenvectors, the 6 $\hc$-invariant primes are: 
$$\ideal{0}, \ \ideal{E_3} , \ \ideal{E_3'} ,\ \ideal{E_1}, \ \ideal{E_2} \mbox{ and } \ideal{E_1,E_2}.$$ 
Moreover the corresponding $\hc$-strata are:
\\$\spec_{\ideal{0}}(\mathbb{H})= \left\{ \ideal{0} \right\} \cup \left\{ \ideal{\Omega - \gamma} \mid \gamma \in \comp^* \right\}$, 
\\$\spec_{\ideal{E_3}}(\mathbb{H})= \left\{ \ideal{E_3} \right\}$, 
\\$\spec_{\ideal{E_3'}}(\mathbb{H})= \left\{ \ideal{E_3'} \right\}$, 
\\$\spec_{\ideal{E_1}}(\mathbb{H})= \left\{ \ideal{E_1} \right\} \cup \left\{ \ideal{E_1,E_2 - \beta} \mid \beta \in \comp^* \right\}$, 
\\$\spec_{\ideal{E_2}}(\mathbb{H})= \left\{ \ideal{E_2} \right\} \cup \left\{ \ideal{E_1 - \alpha, E_2} \mid \alpha \in \comp^* \right\}$ 
\\and  $\spec_{\ideal{E_1,E_2}}(\mathbb{H})= \left\{ \ideal{E_1,E_2} \right\}$. 

We deduce from this description of the $\hc$-strata and the the fact that primitive ideals are exactly those primes that are maximal within their $\hc$-strata, see Theorem \ref{theoprim}, that the primitive ideals of $\Us$ are exactly those primes that appear in double brackets in the previous picture.

We now investigate the group of automorphisms of $\mathbb{H}=\Us$. In that case, the torus acting naturally on $\Us$ is $\hc=(\comp^*)^2$, there is only one non-trivial diagram automorphism $w$ that exchanges $E_1$ and $E_2$, and so the subgroup $G$ of $\aut (\Us)$ generated by the torus and diagram automorphisms is isomorphic to the semi-direct product $(\comp^*)^2 \rtimes S_2$. We want to prove that $\aut(\Us)=G$.  

In order to do this, we study the action of $\aut(\Us)$ on the set of primitive ideals that are not maximal. As there are only two of them, $\ideal{E_3}$ and $\ideal{E_3'}$, an automorphism of $\mathbb{H}$ will either fix them or permute them.

Let $\sigma$ be an automorphism of $\Us$. It follows from the previous observation that  
$$ \mbox{either } \sigma (\ideal{E_3})=\ideal{E_3} \mbox{ and } \sigma (\ideal{E_3'})=\ideal{E_3'},$$
$$\mbox{or } \sigma (\ideal{E_3})=\ideal{E_3'} \mbox{ and } \sigma (\ideal{E_3'})=\ideal{E_3}.$$
As it is clear that the diagram automorphism $w$ permutes the ideals $\ideal{E_3}$ and $\ideal{E_3'}$, we get that there exists an automorphism $g \in G$ such that 
$$  g \circ \sigma (\ideal{E_3})=\ideal{E_3} \mbox{ and } g \circ \sigma (\ideal{E_3'})=\ideal{E_3'}.$$
Then, as $E_3$ and $E_3'$ are normal, we deduce from Lemma \ref{utile} that there exist $\lambda,\lambda' \in \comp^*$ such that
$$g \circ \sigma(E_3)= \lambda E_3 \mbox{ and } g\circ \sigma (E_3')=\lambda ' E_3'.$$
In order to prove that $g \circ \sigma$ is an element of $G$, we now use the $\mathbb{N}$-graduation of $\Us$ introduced in Section \ref{S:aut:ngr}. With respect to this graduation, $E_1$ and $E_2$ are homogeneous of degree 1, and so $E_3$ and $E_3'$ are homogeneous of degree 2. Moreover, as $(q^{-2}-1)E_1E_2=E_3 + q^{-1} E_3'$, we deduce from the above discussion that 
$$g \circ \sigma (E_1 E_2)=\frac{1}{q^{-2}-1} \left( \lambda E_3 + q^{-1} \lambda' E_3' \right)$$
 has degree two. On the other hand, as $\Us$ is a connected $\mathbb{N}$-graded algebra with enough $q$-commutation relations by Proposition \ref{propUngraded}, it follows from Corollary \ref{C:aut:ngr}
that $\sigma(E_1)= a_1 E_1 +a_2 E_2 + u$ and $\sigma (E_2)=b_1 E_1 +b_2 E_2 +v$, where $(a_1,a_2), (b_1,b_2) \in \comp^2 \setminus \{(0,0)\}$, and $u,v \in \Us$ are linear combinations of homogeneous elements of degree greater than one. As $g \circ \sigma(E_1) . g \circ\sigma(E_2)$ has degree two, it is clear that $u=v=0$. To conclude that $g \circ \sigma \in G$, it just remains to prove that $a_2=0=b_1$. This can be easily shown by using the fact that $
g \circ \sigma (-E_1 E_2 +q^{-1} E_2 E_1) =g \circ \sigma (E_3) = \lambda E_3$; replacing $g \circ \sigma (E_1)$ and $g \circ \sigma (E_2)$ by $a_1 E_1 +a_2 E_2$ and $b_1 E_1 +b_2 E_2$ respectively, and then identifying the coefficients in the PBW basis, leads to 
$a_2=0=b_1$, as required. Hence we have just proved that $g \circ \sigma \in G$, so that $\sigma$ itself belongs to $G$ the subgroup of $\aut (\Us)$ generated by the torus and diagram automorphisms. Hence one can state the following result that confirms the Andruskiewitsch-Dumas Conjecture. 

\begin{prop}
$\aut(\Us) \simeq (\mathbb{C}^*)^2 \rtimes \mathrm{autdiagr}(\mathfrak{sl}_3)$
\end{prop}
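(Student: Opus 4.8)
The plan is to prove the two inclusions $G\subseteq\aut(\mathbb{H})$ and $\aut(\mathbb{H})\subseteq G$ separately. The first is immediate from the very construction of the torus and diagram automorphisms, so the real content lies in the reverse inclusion. The guiding idea is that any automorphism $\sigma$ of $\mathbb{H}$ must permute the primitive spectrum while respecting the poset structure detected by the $\hc$-stratification, and in particular must preserve the subset of primitive ideals that fail to be maximal. From the explicit description of $\spec(\mathbb{H})$ together with the Goodearl--Letzter criterion (Theorem \ref{theoprim}), there are exactly two such ideals, namely $\ideal{E_3}$ and $\ideal{E_3'}$; hence $\sigma$ either fixes each of them or interchanges them. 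Since the nontrivial diagram automorphism $w$ swaps $\ideal{E_3}$ and $\ideal{E_3'}$, I would replace $\sigma$ by $g\circ\sigma$ for a suitable $g\in G$ and thereby reduce to the case where both ideals are fixed.

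Once $\sigma$ fixes $\ideal{E_3}$ and $\ideal{E_3'}$, I would invoke the normality of $E_3$ and $E_3'$ together with Lemma \ref{utile}: two normal elements generating the same ideal differ by a nonzero scalar, so $\sigma(E_3)=\lambda E_3$ and $\sigma(E_3')=\lambda' E_3'$ for some $\lambda,\lambda'\in\comp^*$. Next I would exploit the connected $\mathbb{N}$-grading of Proposition \ref{propUngraded}, under which $E_1,E_2$ have degree one and $E_3,E_3'$ have degree two. By Corollary \ref{C:aut:ngr} the images $\sigma(E_1),\sigma(E_2)$ have nonzero degree-one component, so I can write $\sigma(E_1)=a_1E_1+a_2E_2+u$ and $\sigma(E_2)=b_1E_1+b_2E_2+v$ with $u,v$ supported in degrees $\geq 2$. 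The relation $(q^{-2}-1)E_1E_2=E_3+q^{-1}E_3'$ shows that $\sigma(E_1E_2)$ is homogeneous of degree two; comparing degrees then forces the tails $u$ and $v$ to vanish, so $\sigma$ restricts to a linear map on $\mathbb{H}_1$.

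It then remains to pin down the linear part. I would substitute $\sigma(E_1)=a_1E_1+a_2E_2$ and $\sigma(E_2)=b_1E_1+b_2E_2$ into the identity $\sigma(E_3)=\lambda E_3$, expand both sides in the PBW basis $E_1^{k_1}E_3^{k_3}E_2^{k_2}$, and identify coefficients; the $q$-commutation relations should force $a_2$ and $b_1$ to vanish. This exhibits $g\circ\sigma$ as a torus automorphism, hence $g\circ\sigma\in G$ and therefore $\sigma\in G$, yielding $\aut(\mathbb{H})=G\simeq(\comp^*)^2\rtimes\mathrm{autdiagr}(\mathfrak{sl}_3)$.

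I expect the conceptual crux to be the first step: recognizing that the unordered pair $\{\ideal{E_3},\ideal{E_3'}\}$ is an automorphism invariant. This rests on the fact that automorphisms preserve primitivity and the relation ``not maximal among primitive ideals,'' which is precisely what Theorem \ref{theoprim} and Malliavin's spectral picture make accessible; without this structural input one would lack a canonical pair of normal elements to anchor the argument. By comparison, the subsequent grading and coefficient computations are routine, the only care needed being to track the $q$-powers appearing in the skew relations.
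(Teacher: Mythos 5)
Your proposal is correct and follows essentially the same route as the paper's own proof: identifying $\ideal{E_3}$ and $\ideal{E_3'}$ as the only non-maximal primitive ideals via Theorem \ref{theoprim}, composing with a diagram automorphism so both are fixed, applying Lemma \ref{utile} to the normal elements, using the grading and Corollary \ref{C:aut:ngr} together with $(q^{-2}-1)E_1E_2=E_3+q^{-1}E_3'$ to kill the higher-degree tails, and finally identifying PBW coefficients in $\sigma(E_3)=\lambda E_3$ to force $a_2=b_1=0$. There are no gaps to report.
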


This result was first obtained independently by Alev and Dumas, \cite[Proposition 2.3]{alevdumasrigidite}, and Caldero, \cite[Proposition 4.4]{caldero}, but using somehow different methods; they studied this automorphism group by looking at its action on the set of normal elements of $\Us$.

\subsection{The case where $\g=\mathfrak{so}_5$.}

In this section we investigate the automorphism group of $\U$ in the case where $\g=\mathfrak{so}_5$. 
In this case there are no diagram automorphisms, so that the Andruskiewitsch-Dumas Conjecture asks whether 
every automorphism of $\Ub$ is a torus automorphism. In \cite{launoisB2} we have proved their conjecture when $\g= \mathfrak{so}_5$. The aim of this section is to present a slightly different proof based both on the original proof and on the recent proof by S.A. Lopes and the author of the Andruskiewitsch-Dumas Conjecture in the case where $\g$ is of type $A_3$.

 In the case where $\g =\mathfrak{so}_5$, the Cartan matrix is $A= \left( \begin{array}{cc} 2 & -2 \\ -1 & 2 \end{array} \right)$, so that $ \Ub$ is the $\mathbb{C}$-algebra generated by two indeterminates $E_1$ and $E_2$ subject to the following relations:
\begin{eqnarray}
& & E_1^3 E_2   - (q^2 +1 + q^{-2} ) E_1^2 E_2 E_1 + (q^2 +1 + q^{-2} ) E_1 E_2 E_1^2 + E_2 E_1^3 = 0 \\
 & & E_2^2 E_1   - (q^2 + q^{-2} ) E_2 E_1 E_2 + E_1 E_2^2 = 0
\end{eqnarray}

We now make explicit a PBW basis of $\Ub$. The Weyl group of $\mathfrak{so}_5$ is isomorphic to the dihedral group $D(4)$. Its longest element is $w_0= - id$; it has two reduced decompositions: $w_0=s_1 s_2 s_1 s_2 =s_2 s_1 s_2 s_1$. Let us choose the reduced decomposition $s_1 s_2 s_1 s_2$ of $w_0$ in order to construct a PBW basis of $\Ub$. According to Section \ref{sectionPBW}, this reduced decomposition leads to the following root vectors:
$$E_{\alpha_1}=E_1 \mbox{, } E_{2\alpha_1 +\alpha_2}= T_1(E_2)=\frac{1}{(q+q^{-1})} \left( E_1^2 E_2 -q^{-1}(q+q^{-1}) E_1 E_2 E_1 +q^{-2} E_2 E_1^2 \right)  \mbox{, } $$
$$E_{\alpha_1+\alpha_2}= T_1 T_2 (E_1) = -E_1 E_2 + q^{-2} E_2 E_1 \mbox{ and } E_{\alpha_2}=T_1T_2T_1(E_2)=E_2.$$

In order to simplify the notation, we set $E_3:= -E_{\alpha_1+\alpha_2}$ and $E_4:=E_{2\alpha_1 +\alpha_2}$. Then, it follows from Theorem \ref{theofond} that
\begin{itemize}
\item The monomials $E_1^{k_1} E_4^{k_4} E_3^{k_3} E_2^{k_2}$, with $k_1, k_2,k_3,k_4$ nonnegative integers, form a PBW-basis of $\Ub$.   
\item $\Ub$ is the iterated Ore extension over $\mathbb{C}$ generated by the indeterminates $E_1$, $E_4$, $E_3$, $E_2$ subject to the following relations:
$$\begin{array}{lll}
E_4 E_1 = q^{-2} E_1 E_4 & & \\
E_3 E_1 = E_1 E_3 - (q+q^{-1}) E_4, \ \ &E_3 E_4 = q^{-2}E_4 E_3, \ \ & \\
E_2 E_1 = q^2 E_1 E_2 - q^2 E_3, & E_2 E_4 = E_4 E_2 - \frac{q^2-1}{q+q^{-1}} E_3^2, \ \ & E_2 E_3 =q^{-2} E_3 E_2.
\end{array}$$

In particular, $\Ub$ is a Noetherian domain, and its group of invertible elements is reduced to $\mathbb{C}^*$.
\end{itemize}

Before describing the automorphism group of $\Ub$, we first describe the centre and the primitive ideals of $\Ub$. 
The centre of $\U$ has been described in general by Caldero, \cite{calderocentre}. In the case where $\g=\mathfrak{so}_5$, his result shows that 
$Z(\Ub)$ is a polynomial algebra in two indeterminates 
$$Z(\Ub) =\comp [z,z'],$$
where 
$$z= (1-q^2)E_1 E_3 +q^2(q+q^{-1})E_4$$
and $$z'=  -(q^2-q^{-2})(q+q^{-1})E_4E_2 +q^2(q^2-1)E_3^2.$$ 

Recall from Section \ref{sectionHstratification} that the torus $\hc=(\comp^*)^2$ acts on $\Ub$ by automorphisms and that the $\hc$-stratification theory of Goodearl and Letzter constructs a partition of the prime spectrum of $\Ub$ into so-called $\hc$-strata, this partition being indexed by the $8=|W|$ $\hc$-invariant prime ideals of $\Ub$.
In \cite{launoisB2}, we have described these eight $\hc$-strata. More precisely, we have obtained the following picture for the poset $\spec(\Ub)$,

$$\xymatrix{ \langle \langle E_1,E_2-\beta \rangle \rangle \ar@{-}[rdd]&& \langle \langle E_1,E_2 \rangle \rangle \ar@{-}[ldd]\ar@{-}[rdd] & & \langle \langle E_1 -\alpha , E_2 \rangle \rangle \ar@{-}[ldd] \\ 
& & & &  \\
 & \langle  E_1 \rangle \ar@{-}[dd]\ar@{-}[rrdd]  & &   \langle  E_2 \rangle \ar@{-}[lldd]\ar@{-}[dd]  &  \\
& & & &  \\
 \langle \langle z,z'- \delta \rangle \rangle \ar@{-}[rdd] \ar@{.}[rrdd]& \langle \langle E_3 \rangle \rangle \ar@{-}[rrdd] \ar@{-}[dd]& \langle \langle z-\gamma,z'- \delta \rangle \rangle \ar@{.}[dd] & \langle \langle E_3' \rangle \rangle \ar@{-}[lldd] \ar@{-}[dd] &  \langle \langle z-\gamma ,z' \rangle \rangle \ar@{-}[ldd] 
\ar@{.}[lldd] \\
& & & &  \\
& \langle  z \rangle \ar@{-}[rdd]&   \mathcal{I} \ar@{-}[dd]& \langle  z' \rangle \ar@{-}[ldd]& \\
& & & &  \\
&  &  \langle  0 \rangle &  &
 }$$
where $\alpha,\beta,\gamma, \delta \in \mathbb{C}^*$, $E_3':=E_1 E_2 - q^{2} E_2 E_1$  and $$\mathcal{I}=\{ \ideal{P(z,z')} \ |\ P \mbox{ is a unitary irreductible polynomial of } \comp[z,z'], \ P \neq z , z' \}.$$

As the primitive ideals are those primes that 
are maximal in their $\hc$-strata, see Theorem \ref{theoprim}, we deduced from this description of the prime spectrum that the primitive ideals of $\Ub$ are the following: 
\\$\bullet$ $\ideal{z-\alpha,z'-\beta}$ with $(\alpha,\beta) \in \comp^2 \setminus \{(0,0) \}$.
\\$\bullet$ $\ideal{E_3}$ and $\ideal{E_3'}$.
\\$\bullet$ $\ideal{E_1- \alpha ,E_2-\beta}$ with $(\alpha,\beta) \in \comp^2 $ such that $\alpha \beta =0$.
\\(They correspond to the ``double brackets'' prime ideals in the above picture.)

Among them, two only are not maximal, $\ideal{E_3}$ and $\ideal{E_3'}$. Unfortunately, as $E_3$ and $E_3'$ are not normal in $\Ub$, one cannot easily obtain information using the fact that any automorphism of $\Ub$ will either preserve or exchange these two prime ideals. Rather than using this observation, we will use the action of $\aut (\Ub)$ on the set of maximal ideals of height two. Because of the previous description of the primitive spectrum of $\Ub$, the height two maximal ideals in $\Ub$ are those $\ideal{z-\alpha,z'-\beta}$ with $(\alpha,\beta) \in \comp^2 \setminus \{(0,0) \}$. In \cite[Proposition 3.6]{launoisB2}, we have proved that the group of units of the factor algebra 
$\Ub / \ideal{z-\alpha,z'-\beta}$ is reduced to $\comp^*$ if and only if both $\alpha$ and $\beta $ are nonzero. 
Consequently, if $\sigma$ is an automorphism of $\Ub$ and $\alpha \in \comp^*$, we get that:
$$\sigma ( \ideal{z- \alpha, z'}) = \ideal{z- \alpha', z'} \mbox{ or } \ideal{z, z' - \beta '},$$
where $\alpha', \beta' \in \comp^*$.
Similarly, if $\sigma$ is an automorphism of $\Ub$ and $\beta \in \comp^*$, we get that:
\begin{eqnarray}
\label{discussion}
\sigma ( \ideal{z, z'-\beta}) = \ideal{z- \alpha', z'} \mbox{ or } \ideal{z, z' - \beta '},
\end{eqnarray}
where $\alpha', \beta' \in \comp^*$. 

We now use this information to prove that the action of $\aut(\Ub)$ on the centre of $\Ub$ is trivial. More precisely, we are now in position to prove the following result.

\begin{prop}
\label{propcentre}
Let $\sigma \in \aut (\Ub)$. There exist $\lambda, \lambda' \in \comp^*$ such that 
$$\sigma(z)=\lambda z \ \ \mbox{ and } \ \ \sigma(z') = \lambda' z'.$$ 
\end{prop}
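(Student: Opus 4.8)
The plan is to combine the $\mathbb{N}$-grading of Section \ref{S:aut:ngr} with the action of $\sigma$ on the special height-two maximal ideals. First I would use the grading to force $\sigma(z)$ and $\sigma(z')$ into an affine shape inside the centre, and then use the permutation of the ideals $\ideal{z-\alpha,z'}$ and $\ideal{z,z'-\beta}$ to kill the remaining lower-order terms.

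The first step is purely graded. Since $E_1,E_2$ have degree $1$, the root vectors $E_3$ and $E_4$ have degrees $2$ and $3$, so $z$ is homogeneous of degree $3$ and $z'$ is homogeneous of degree $4$. As $\sigma$ preserves the centre $Z(\Ub)=\comp[z,z']$, both $\sigma(z)$ and $\sigma(z')$ are polynomials in $z,z'$. Each monomial $z^iz'^j$ is homogeneous of degree $3i+4j$, and since $z,z'$ are algebraically independent all these monomials are linearly independent; hence the degree (in the sense of Section \ref{S:aut:ngr}) of a nonzero element of $\comp[z,z']$ equals the largest $3i+4j$ occurring in it, its top graded component being a nonzero linear combination of the monomials of maximal total degree. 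Corollary \ref{C:aut:ngr} gives $\mathrm{deg}\,\sigma(z)=\mathrm{deg}\,z=3$ and $\mathrm{deg}\,\sigma(z')=\mathrm{deg}\,z'=4$, so only monomials with $3i+4j\leq 3$, resp. $\leq 4$, can appear, namely $1,z$ resp. $1,z,z'$; and the maximal degree being attained forces the top coefficient to be nonzero. Thus $\sigma(z)=az+b$ and $\sigma(z')=cz'+dz+e$ with $a,c\in\comp^*$ and $b,d,e\in\comp$, and it remains to prove $b=d=e=0$.

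The key step is to apply $\sigma$ to the distinguished ideals. Because $z,z'$ are central and $\sigma$ is an algebra automorphism, for $\beta\in\comp^*$ elementary manipulation of ideals generated by central elements gives
$$\sigma\!\left(\ideal{z,z'-\beta}\right)=\ideal{\sigma(z),\sigma(z')-\beta}=\ideal{z+\tfrac{b}{a},\,z'-\tfrac{\beta-e+db/a}{c}}.$$
By the discussion culminating in (\ref{discussion}) this image is either $\ideal{z-\alpha',z'}$ or $\ideal{z,z'-\beta'}$ with $\alpha',\beta'\in\comp^*$; and the two coordinates can be read off unambiguously, since $\ideal{z-\alpha_1,z'-\beta_1}=\ideal{z-\alpha_2,z'-\beta_2}$ would put the scalars $\alpha_1-\alpha_2$ and $\beta_1-\beta_2$ into a proper ideal, forcing $(\alpha_1,\beta_1)=(\alpha_2,\beta_2)$. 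The first coordinate $-b/a$ is independent of $\beta$: if $b\neq 0$ it is nonzero, so the image is always of the form $\ideal{z-\alpha',z'}$ and its second coordinate $\frac{\beta-e+db/a}{c}$ would vanish for every $\beta\in\comp^*$, impossible for a nonconstant affine function of $\beta$. Hence $b=0$; the image is then always $\ideal{z,z'-\beta'}$ with $\beta'=\frac{\beta-e}{c}\in\comp^*$ for all $\beta\in\comp^*$, and since $\beta=e$ would otherwise give $\beta'=0$ I conclude $e=0$.

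Finally, with $b=e=0$ in hand I would treat the other family: for $\alpha\in\comp^*$,
$$\sigma\!\left(\ideal{z-\alpha,z'}\right)=\ideal{az-\alpha,\,cz'+dz}=\ideal{z-\tfrac{\alpha}{a},\,z'+\tfrac{d\alpha}{ac}}.$$
Its first coordinate $\alpha/a$ is nonzero, so by (\ref{discussion}) the image must be of the form $\ideal{z-\alpha',z'}$, which forces its second coordinate to vanish, i.e. $d=0$. Putting everything together yields $\sigma(z)=az$ and $\sigma(z')=cz'$, so the proposition holds with $\lambda=a$ and $\lambda'=c$. I expect the main obstacle to be the first step: the grading argument isolating the affine form of $\sigma(z),\sigma(z')$ does all the structural work but cannot by itself eliminate the terms $b,d,e$, and the point of the proof is that the geometric input from (\ref{discussion}) supplies exactly the constraints needed; the remaining case analyses are then routine bookkeeping.
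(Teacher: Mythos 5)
Your proof breaks down at the first step, where you claim that Corollary \ref{C:aut:ngr} gives $\mathrm{deg}\,\sigma(z)=3$ and $\mathrm{deg}\,\sigma(z')=4$. The corollary states that for a homogeneous element $x$ of degree $d$ one has $\sigma(x)=y_d+y_{>d}$ with $y_d\in\U_d\setminus\{0\}$ and $y_{>d}\in\U_{\geq d+1}$; that is, it controls the \emph{lowest}-degree component (the valuation) of $\sigma(x)$, not its top degree. So what actually follows is the opposite of what you assert: writing $\sigma(z),\sigma(z')\in Z(\Ub)=\comp[z,z']$ as polynomials in $z,z'$ (of weighted degrees $3$ and $4$), the constant term of $\sigma(z)$ vanishes automatically and the coefficient of $z$ is nonzero, but nothing excludes monomials of \emph{higher} weighted degree. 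All the corollary yields is $\sigma(z)=az+cz'+(\mbox{terms of weighted degree}\geq 6)$ with $a\neq 0$, and $\sigma(z')=c'z'+(\mbox{terms of weighted degree}\geq 6)$ with $c'\neq 0$. Your affine normal forms $\sigma(z)=az+b$ and $\sigma(z')=cz'+dz+e$ are therefore unjustified, and your subsequent bookkeeping with the ideals $\ideal{z-\alpha,z'}$ and $\ideal{z,z'-\beta}$ eliminates only the low-order terms $b,d,e$; it never addresses the possible terms $cz'$, $z^2$, $zz'$, etc. One cannot simply assume that automorphisms preserve top degree: that is essentially part of what the proposition is establishing.

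Note that the paper runs the two ingredients in the opposite order, and this is not an accident. It first pins down $\sigma(\ideal{z})$ as an ideal: setting $P_{\alpha,\beta}:=\ideal{z-\alpha,z'-\beta}$, noetherianity of $\Ub$ gives $\ideal{z}=\bigcap_{i}P_{0,\beta_i}$ for any family of pairwise distinct $\beta_i\in\comp^*$, then (\ref{discussion}) forces $\sigma(\ideal{z})$ to equal $\ideal{z}$ or $\ideal{z'}$, and Lemma \ref{utile} (central elements are normal) gives $\sigma(z)=\lambda z$ or $\sigma(z)=\lambda z'$. Only then does the grading enter, and there it is used correctly: $\lambda z'$ has valuation $4$, while Corollary \ref{C:aut:ngr} forces $\sigma(z)$ to have valuation $3$, so the second case is impossible. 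If you want to keep your polynomial point of view, you must replace your first step by an argument of this type: for instance, show that $F:=\sigma(z)$, viewed as a polynomial in $z,z'$, vanishes at the infinitely many pairwise distinct points $(\gamma_i,\delta_i)$ (images under $\sigma$ of the points $(0,\beta_i)$), which by (\ref{discussion}) all lie on the union of the two coordinate axes; a pigeonhole argument then gives $z\mid F$ or $z'\mid F$, and you would still need the valuation argument to exclude the second alternative and a separate argument to remove the cofactor. At that point you have essentially reconstructed the paper's proof.
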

\begin{proof}
We only prove the result for $z$. First, using the fact that $\Ub$ is noetherian, it is easy to show that, for any family $\{\beta_i\}_{i \in \mathbb{N}}$ of pairwise distinct nonzero complex numbers, we have:
  
$$ \ideal{z} = \bigcap_{i \in \mathbb{N}} P_{0,\beta_{i}} \ \mbox{ and } \ \ideal{z'} = \bigcap_{i \in \mathbb{N}} P_{\beta_{i},0},$$
where $P_{\alpha,\beta}:=\ideal{ z-\alpha, z' - \beta }$. Indeed, if the inclusion $$ \ideal{z} \subseteq I:=\bigcap_{i \in \mathbb{N}} P_{0,\beta_{i}}$$
is not an equality, then any $P_{0,\beta_{i}}$ is a minimal prime over $I$ for height reasons. As the $P_{0,\beta_{i}}$ are pairwise distinct, $I$ is a two-sided ideal of $\Ub$ with infinitely many prime ideals minimal over it. This contradicts the noetherianity of $\Ub$. Hence  $$ \ideal{z} = \bigcap_{i \in \mathbb{N}} P_{0,\beta_{i}} \ \mbox{ and } \ \ideal{z'} = \bigcap_{i \in \mathbb{N}} P_{\beta_{i},0},$$
and so
$$\sigma \left( \ideal{z} \right)  = \bigcap_{i \in \mathbb{N}} \sigma (P_{0,\beta_{i}}).$$

It follows from (\ref{discussion})  that, for all $i \in \mathbb{N}$, there exists $(\gamma_i,\delta_i) \neq (0,0)$ 
with $\gamma_i = 0$ or $\delta_i =0$  such that 
$$\sigma (P_{0,\beta_{i}})= P_{\gamma_i,\delta_i}.$$
$ $

Naturally, we can choose the family $\{\beta_i\}_{i \in \mathbb{N}}$ such that 
either $\gamma_i=0$ for all $i \in \mathbb{N}$, or $\delta_i=0$ for all $i \in \mathbb{N}$. Moreover, observe that, as the $\beta_i$ are pairwise distinct, so are the $\gamma_i$ or the $\delta_i$.

Hence, either 
$$\sigma \left( \langle z \rangle \right)  = \bigcap_{i \in \mathbb{N}} P_{\gamma_i,0},$$
or $$\sigma \left( \langle z \rangle \right)  = \bigcap_{i \in \mathbb{N}} P_{0,\delta_{i}},$$
that is, 
$$\mbox{either }  \ideal{\sigma (z)} =\sigma \left( \ideal{z} \right)  =  \ideal{z'}  \mbox{ or }\ideal{\sigma (z)} =\sigma \left( \ideal{z} \right)  =  \ideal{z}  .$$

As $z$, $\sigma (z)$ and $z'$ are all central, it follows from Lemma \ref{utile} that there exists $\lambda \in \comp^*$ such that 
either $\sigma(z)= \lambda z$ or $\sigma(z)= \lambda z'$. 

To conclude, it just remains to show that the second case cannot happen. In order to do this, we use a graded argument. Observe that, with respect to the $\mathbb{N}$-graduation of $\Ub$ defined in Section \ref{S:aut:ngr}, $z$ and $z'$ are homogeneous of  degree 3 and 4 respectively. Thus, if $\sigma(z)= \lambda z'$, then we would obtain a contradiction with the fact that every automorphism of $\Ub$ preserves the valuation, see Corollary \ref{C:aut:ngr}. Hence $\sigma(z)= \lambda z$, as desired. The corresponding result for $z'$ can be proved in a similar way, so we omit it.  
\end{proof}

Andruskiewitsch and Dumas, \cite[Proposition 3.3]{andrusdumas}, have proved that the subgroup of those automorphisms of $\Ub$ that stabilize $\ideal{z}$ is isomorphic to $(\comp^*)^2$. Thus, as we have just shown that every automorphism of $\Ub$ fixes $\ideal{z}$, we get that $\aut(\Ub)$ itself is isomorphic to $(\comp^*)^2$. This is the route that we have followed in \cite{launoisB2} in order to prove the Andruskiewitsch-Dumas Conjecture in the case where $\g = \mathfrak{so}_5$. Recently, with Samuel Lopes, we proved this Conjecture in the case where $\g=\mathfrak{sl}_4$ using different methods and in particular graded arguments. We are now using (similar) graded arguments to prove that every automorphism of $\Ub$ is a torus automorphism (witout using results of Andruskiewitsch and Dumas).

In the proof, we will need the following relation that is easily obtained by straightforward computations.

\begin{lem}
\label{lemmerelations}
 $(q^2 - 1 ) E_3 E_3' = (q^4-1) z E_2 + q^2 z'$.
\end{lem}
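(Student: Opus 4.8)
The plan is to verify the identity $(q^2-1)E_3E_3' = (q^4-1)zE_2 + q^2z'$ by direct computation, expressing every term in the PBW basis $E_1^{k_1}E_4^{k_4}E_3^{k_3}E_2^{k_2}$ and comparing coefficients. Recall the definitions from the excerpt: $E_3 = E_1E_2 - q^{-2}E_2E_1$ (since $E_3 = -E_{\alpha_1+\alpha_2} = E_1E_2 - q^{-2}E_2E_1$), $E_3' = E_1E_2 - q^2E_2E_1$, and the central elements $z = (1-q^2)E_1E_3 + q^2(q+q^{-1})E_4$ and $z' = -(q^2-q^{-2})(q+q^{-1})E_4E_2 + q^2(q^2-1)E_3^2$. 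The approach is purely computational, so I would organize it to minimize the number of reorderings needed against the Ore-extension relations listed above.

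First I would rewrite the left-hand side. Rather than expanding $E_3$ and $E_3'$ fully into $E_1,E_2$ words, I would keep $E_3$ as a PBW generator and only expand $E_3'$, writing $E_3' = E_3 - (q^2-q^{-2})E_2E_1 \cdot(\text{something})$ — more precisely, comparing the two definitions gives $E_3 - E_3' = (q^2-q^{-2})E_2E_1$, so $E_3' = E_3 - (q^2 - q^{-2})E_2E_1$. Then $E_3E_3' = E_3^2 - (q^2-q^{-2})E_3E_2E_1$. Using the commutation relations $E_2E_3 = q^{-2}E_3E_2$ and $E_3E_1 = E_1E_3 - (q+q^{-1})E_4$, I would push $E_3$ past $E_2E_1$ to bring everything into PBW order, producing terms in $E_3^2$, $E_1E_3E_2$ (reordered), and $E_4E_2$. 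The key relations to invoke are exactly those in the iterated-Ore-extension presentation, together with $E_2E_4 = E_4E_2 - \frac{q^2-1}{q+q^{-1}}E_3^2$ to handle any $E_4$-$E_2$ crossings that arise.

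On the right-hand side I would substitute the given formulas for $z$ and $z'$ directly: $(q^4-1)zE_2 = (q^4-1)(1-q^2)E_1E_3E_2 + (q^4-1)q^2(q+q^{-1})E_4E_2$, and $q^2z' = -q^2(q^2-q^{-2})(q+q^{-1})E_4E_2 + q^2 \cdot q^2(q^2-1)E_3^2$. Here I would again reorder $E_1E_3E_2$ into PBW form using $E_3E_1 = E_1E_3 - (q+q^{-1})E_4$ read backwards and $E_2E_3 = q^{-2}E_3E_2$, so that both sides are finally expressed as $\comp$-linear combinations of the basis monomials $E_1^{k_1}E_4^{k_4}E_3^{k_3}E_2^{k_2}$. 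Matching the coefficients of $E_3^2$, of $E_4E_2$, and of the mixed $E_1E_3E_2$-type term then yields the claimed equality after cancellation of the $q$-power factors.

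The main obstacle is purely bookkeeping: keeping track of the precise powers of $q$ through each reordering, since several relations introduce correction terms (the $E_4$ term in $E_3E_1$, the $E_3^2$ term in $E_2E_4$) that must cancel exactly against contributions from $z$ and $z'$. I expect no structural difficulty — the identity is stated as ``easily obtained by straightforward computations'' — so the real work is verifying that the correction terms proportional to $E_4E_2$ and $E_3^2$ match on both sides, which amounts to checking two scalar identities in $q$. I would double-check these by evaluating the coefficient of $E_4E_2$ (combining $-(q^2-q^{-2})$ from the left with the $z,z'$ contributions on the right) and the coefficient of $E_3^2$ independently, confirming both reduce to the same value after dividing through by the common factors.
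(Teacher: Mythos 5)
Your proposal is correct: writing $E_3'=E_3-(q^2-q^{-2})E_2E_1$, expanding $E_3E_3'=E_3^2-(q^2-q^{-2})E_3E_2E_1$, reordering into the PBW basis, and matching the coefficients of $E_3^2$, $E_1E_3E_2$ and $E_4E_2$ (note $E_1E_3E_2$ is already in PBW order, so no reordering is needed on the right-hand side) does verify the identity, with all three coefficients reducing to multiples of $(q^2-1)(q^4-1)$. This is essentially the paper's own approach, since the paper gives no argument beyond asserting that the relation is ``easily obtained by straightforward computations,'' and your direct PBW-coefficient check is exactly that computation.
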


\begin{prop}
Let $\sigma $ be an automorphism of $\Ub$. Then there exist $a_1,b_2 \in \comp^*$ such that 
$$\sigma (E_1) = a_1 E_1 \ \mbox{ and } \ \sigma(E_2)=b_2 E_2.$$
\end{prop}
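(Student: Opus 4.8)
The plan is to exploit both ingredients already at hand: that $\sigma$ rescales the central generators $z,z'$ (Proposition \ref{propcentre}) and that $\sigma$ respects the valuation attached to the $\mathbb{N}$-grading (Corollary \ref{C:aut:ngr}). By Corollary \ref{C:aut:ngr} I write $\sigma(E_1)=\ell_1+u$ and $\sigma(E_2)=\ell_2+v$ with $\ell_1,\ell_2\in\Ub_1\setminus\{0\}$ and $u,v\in\Ub_{\geq 2}$. Since $\Ub$ is a domain, sending a homogeneous element to the lowest-degree component of its $\sigma$-image defines a graded algebra endomorphism $\tau$ of $\Ub$; running the same construction for $\sigma^{-1}$ shows that $\tau$ is a graded \emph{automorphism}, with $\tau(E_i)=\ell_i$. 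As $z,z'$ are homogeneous (of degrees $3$ and $4$) with $\sigma(z)=\lambda z$ and $\sigma(z')=\lambda' z'$, passing to lowest-degree terms yields $\tau(z)=\lambda z$ and $\tau(z')=\lambda' z'$.

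First I would show that $\tau$ is diagonal. Writing $\ell_1=a_1E_1+a_2E_2$ and $\ell_2=b_1E_1+b_2E_2$, the fact that $\tau$ is an algebra homomorphism forces $\ell_1,\ell_2$ to satisfy the two quantum Serre relations of $\Ub$. Expanding these in the PBW basis of Theorem \ref{theofond} and matching the coefficients of the extreme monomials $E_1^3,E_2^3$ (from the cubic relation) and $E_1^4,E_2^4$ (from the quartic relation) gives $a_1b_1^2=a_2b_2^2=0$ and $a_1^3b_1=a_2^3b_2=0$; together with the invertibility of $\tau$ this leaves exactly two possibilities, the diagonal one and the anti-diagonal swap $E_1\leftrightarrow E_2$ (up to scalars). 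The swap is ruled out because it would send the weight $2\alpha_1+\alpha_2$ of $z$ to $\alpha_1+2\alpha_2$, contradicting $\tau(z)\in\comp z$ (equivalently, the swap does not preserve the asymmetric long-root relation, $B_2$ having no diagram symmetry). Hence $\ell_1=a_1E_1$, $\ell_2=b_2E_2$ with $a_1,b_2\in\comp^*$, so $\tau=\phi_{(a_1,b_2)}$ is a torus automorphism.

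Next I normalise: replacing $\sigma$ by $\phi_{(a_1,b_2)}^{-1}\circ\sigma$, I may assume $\tau=\mathrm{id}$, i.e.\ $\sigma(E_1)=E_1+u$ and $\sigma(E_2)=E_2+v$ with $u,v\in\Ub_{\geq2}$, and comparing lowest-degree terms in Proposition \ref{propcentre} forces the scalars to be $1$, so that $\sigma(z)=z$ and $\sigma(z')=z'$ exactly. It remains to prove $u=v=0$. For this I pass to \emph{top}-degree symbols: let $\bar E_i$ be the highest-degree homogeneous component of $\sigma(E_i)$, of degree $d_i\geq 1$. Since $\sigma$ kills each Serre relation, the top component of $0=R_i(\sigma(E_1),\sigma(E_2))$ shows $R_i(\bar E_1,\bar E_2)=0$, so $E_i\mapsto\bar E_i$ extends to an algebra endomorphism $\psi$ of $\Ub$. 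Because every monomial of $z$ (resp.\ $z'$) has weight $2\alpha_1+\alpha_2$ (resp.\ $2\alpha_1+2\alpha_2$), the top component of $\sigma(z)=z$ equals $\psi(z)$, sitting in degree $2d_1+d_2$ whenever $\psi(z)\neq0$, and similarly $\psi(z')$ sits in degree $2d_1+2d_2$. Thus if $\psi(z)\neq0$ then $2d_1+d_2=3$, while if $\psi(z')\neq0$ then $2d_1+2d_2=4$; either equality forces $d_1=d_2=1$, whence $\sigma(E_i)\in\Ub_1$ carries no tail and $\sigma(E_i)=E_i$.

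The main obstacle is therefore to exclude the degenerate case $\psi(z)=\psi(z')=0$, in which the top symbol collapses the whole centre and $\psi$ factors through $\Ub/\ideal{z,z'}$ (a quotient of GK-dimension $2$). Ruling this out needs more than the relations satisfied by $\bar E_1,\bar E_2$: one must use that $\psi$ arises as the top symbol of an \emph{automorphism}, so that $\sigma^{-1}$ supplies a reverse top symbol, together with the fact that $\Ub$ is an $\mathbb{N}$-graded algebra with enough $q$-commutation relations (Proposition \ref{propUngraded}). Concretely, I would apply $\sigma$ to the explicit expression $z=(1-q^2)E_1E_3+q^2(q+q^{-1})E_4$ and to Lemma \ref{lemmerelations}, which after the normalisation of the previous paragraph reads $\sigma(E_3)\sigma(E_3')=E_3E_3'+(q^2+1)\,z\,v$, and compare lowest-degree homogeneous parts: the right-hand side pins the leading deformation of $E_3E_3'$ to a multiple of $z\,v$, and the rigidity of the $q$-commutation relations among the PBW generators $E_1,E_4,E_3,E_2$ should leave no room for a nonzero $v$ (and symmetrically none for a nonzero $u$). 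This would force $u=v=0$; undoing the normalisation then gives $\sigma(E_1)=a_1E_1$ and $\sigma(E_2)=b_2E_2$, as required. I expect this final non-cancellation step to be the technical heart of the argument.
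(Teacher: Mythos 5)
Your first half is sound: Corollary \ref{C:aut:ngr}, applied to both $\sigma$ and $\sigma^{-1}$, does make the lowest-degree-component map $\tau$ a graded automorphism of $\Ub$, and combining the quantum Serre relations with $\tau(z)\in\comp z$ correctly forces $\tau$ to be a torus automorphism (this is, in reorganised form, the step the paper leaves to the end, where the Serre relations kill the off-diagonal coefficients $a_2$ and $b_1$). The normalisation $\tau=\mathrm{id}$, giving $\sigma(E_1)=E_1+u$, $\sigma(E_2)=E_2+v$ with $u,v\in\U_{\geq 2}$ and $\sigma(z)=z$, $\sigma(z')=z'$, is also harmless.

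The gap is in the second half, and you have named it yourself without closing it: nothing you write excludes the degenerate case $\psi(z)=\psi(z')=0$, in which the top-degree symbols $\bar{E}_1,\bar{E}_2$ cancel inside $z$ and $z'$, so the degree equations $2d_1+d_2=3$ and $2d_1+2d_2=4$ are simply unavailable. Your proposed remedy --- apply $\sigma$ to $z=(1-q^2)E_1E_3+q^2(q+q^{-1})E_4$ and to Lemma \ref{lemmerelations} and invoke the ``rigidity of the $q$-commutation relations'' --- assembles the right ingredients but is not an argument: ``should leave no room for a nonzero $v$'' is the desired conclusion, not a proof of it. This cancellation scenario is exactly what the paper's proof is built to handle, and it does so by different bookkeeping: it tracks the degrees $d_i=\mathrm{deg}(\sigma(E_i))$ of the images of \emph{all} the root vectors $E_1,E_3,E_3',E_4,E_2$, and turns each possible leading-term cancellation into a linear equation. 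From $\sigma(z)=\lambda z$ one gets $d_1+d_3=d_4$ (when $d_1+d_3>3$); from $\sigma(z')=\lambda'z'$ one gets $d_2+d_4=2d_3$; these give $d_1+d_2=d_3$, hence $d_3>2$ and $d_3+d_3'>4$, so Lemma \ref{lemmerelations} yields $d_3+d_3'=3+d_2$; combining, $d_1+d_3'=3$, which with $d_1\geq 1$, $d_3'\geq 2$ forces $d_1=1$ and $d_3'=2$, and a short further case analysis gives $d_2=1$. In other words, the paper lets the ``degenerate'' cancellations happen and defeats them by arithmetic on degrees, whereas your proposal stalls precisely at that point: the key assertion $u=v=0$ is not proved. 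To repair your write-up you would need to carry out this (or an equivalent) degree bookkeeping involving $\sigma(E_3)$, $\sigma(E_3')$ and $\sigma(E_4)$, not just the images of the generators $E_1,E_2$.
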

\begin{proof}
For all $i \in \{ 1, \dots, 4\}$, we set $d_i:=\mathrm{deg} (\sigma(E_i))$. We also set $d_3':=\mathrm{deg} (\sigma(E_3'))$. It follows from Corollary \ref{C:aut:ngr} 
that $d_1,d_2 \geq 1$, $d_3,d_3' \geq 2$ and $d_4 \geq 3$. First we prove that $d_1=d_2=1$.

Assume first that $d_1+d_3 >3$. As $z= (1-q^2)E_1 E_3 +q^2(q+q^{-1})E_4$ and $\sigma(z) = \lambda z$ with $\lambda \in \comp^*$ by Proposition \ref{propcentre}, we get:
\begin{eqnarray}
 \label{eqz}
\lambda z &= &(1-q^2)\sigma(E_1) \sigma( E_3) +q^2(q+q^{-1})\sigma(E_4).
\end{eqnarray}
Recall that $\mathrm{deg} (uv)=\mathrm{deg} (u)+\mathrm {deg} (v)$ for $u, v\neq 0$, as $\U$ is a domain. Thus, as $\mathrm{deg} (z) =3 < \mathrm{deg}(\sigma(E_1) \sigma( E_3))=d_1+d_3$, we deduce from (\ref{eqz}) that $d_1 +d_3 = d_4$. As $z'=  -(q^2-q^{-2})(q+q^{-1})E_4E_2 +q^2(q^2-1)E_3^2$ and $\mathrm{deg} (z') =4 < d_1+d_3+d_2=d_4+d_2= \mathrm{deg}(\sigma(E_4) \sigma( E_2))$, we get in a similar manner that $d_2 +d_4 = 2 d_3$. Thus $d_1 +d_2=d_3$. As $d_1 +d_3 >3$, this forces $d_3 > 2$ and so $d_3 +d_3' > 4$. Thus we deduce from Lemma \ref{lemmerelations} that $d_3 +d_3'= 3 +d_2$. Hence $d_1 +d_3' = 3$. 
As $d_1 \geq 1$ and $d_3' \geq 2$, this implies $d_1=1$ and $d_3'=2$. 

Thus we have just proved that $d_1=\mathrm{deg} (\sigma(E_1))=1$ and either $d_3=2$ or $d_3'=2$. To prove that 
$d_2=1$, we distinguish between these two cases.

If $d_3=2$, then as previously we deduce from the relation $z'=  -(q^2-q^{-2})(q+q^{-1})E_4E_2 +q^2(q^2-1)E_3^2$ 
that $d_2+d_4=4$, so that $d_2=1$, as desired.

If $d_3'=2$, then one can use the definition of $E_3'$ and the previous expression of $z'$ in order to prove that 
$z'= q^{-2} (q^2-1)E_3^{'2} + E_2 u$, where $u$ is a nonzero homogeneous element of $\Ub$ of degree 3. 
($u$ is nonzero since $\ideal{z'}$ is a completely prime ideal and $E_3' \notin \ideal{z'}$ for degree reasons.) As $d_3'=2$ and  $\mathrm{deg} (\sigma(z')) =4$, we get as previously that $d_2=1$.

To summarise, we have just proved that $\mathrm{deg} (\sigma(E_1))=1=\mathrm{deg} (\sigma(E_2))$, so that 
 $\sigma(E_1)= a_1 E_1 +a_2 E_2 $ and $\sigma (E_2)=b_1 E_1 +b_2 E_2 $, where $(a_1,a_2), (b_1,b_2) \in \comp^2 \setminus \{(0,0)\}$. To conclude that $a_2=b_1=0$, one can for instance use the fact that $\sigma(E_1)$ and $\sigma(E_2)$ must satisfy the quantum Serre relations. 
\end{proof}

We have just confirmed the Andruskiewitsch-Dumas Conjecture in the case where $\g=\mathfrak{so}_5$. 

\begin{theo}
Every automorphism of $\Ub$ is a torus automorphism, so that
$$\aut (\Ub) \simeq (\comp^*)^2.$$ 
\end{theo}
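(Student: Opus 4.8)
The plan is to leverage the preceding Proposition, which already establishes that any $\sigma \in \aut(\Ub)$ fixes the central generators up to scalars, namely $\sigma(z) = \lambda z$ and $\sigma(z') = \lambda' z'$ for some $\lambda, \lambda' \in \comp^*$. The goal of the final theorem is to show that such a $\sigma$ must in fact be a torus automorphism, i.e. $\sigma(E_1) = a_1 E_1$ and $\sigma(E_2) = b_2 E_2$ with $a_1, b_2 \in \comp^*$. First I would invoke the degree analysis established in the proof of the preceding Proposition: using Corollary \ref{C:aut:ngr} together with the centrality relations for $z$ and $z'$ and the identity in Lemma \ref{lemmerelations}, one forces $\deg(\sigma(E_1)) = \deg(\sigma(E_2)) = 1$. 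This is the real engine of the argument, and it is carried out by tracking how the degrees $d_i := \deg(\sigma(E_i))$ interact through the three defining expressions for $z$, $z'$, and the product $E_3 E_3'$.

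Once $d_1 = d_2 = 1$ is secured, the structure collapses nicely. Since $\sigma$ preserves the degree-one homogeneous component (again by Corollary \ref{C:aut:ngr}, the leading term of $\sigma(E_i)$ is a nonzero element of $\Ub_1$), and since $\Ub_1 = \comp E_1 \oplus \comp E_2$, we must have
$$\sigma(E_1) = a_1 E_1 + a_2 E_2 \quad \text{and} \quad \sigma(E_2) = b_1 E_1 + b_2 E_2,$$
with $(a_1, a_2), (b_1, b_2) \in \comp^2 \setminus \{(0,0)\}$. The remaining task is to kill the off-diagonal coefficients $a_2$ and $b_1$.

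The concluding step is to exploit the fact that $\sigma(E_1)$ and $\sigma(E_2)$ must themselves satisfy the quantum Serre relations defining $\Ub$, since $\sigma$ is an algebra automorphism. I would substitute the linear expressions above into the two Serre relations for $\mathfrak{so}_5$ and expand everything in the PBW basis $E_1^{k_1} E_4^{k_4} E_3^{k_3} E_2^{k_2}$. Comparing coefficients of the basis monomials yields a system of polynomial equations in $a_1, a_2, b_1, b_2$; because $q$ is not a root of unity, the asymmetry between the two Serre relations (one cubic in $E_1$, the other cubic in $E_2$) forces $a_2 = 0$ and $b_1 = 0$. The invertibility of $\sigma$ then guarantees $a_1, b_2 \in \comp^*$. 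This substitute-and-identify computation is routine but slightly tedious, and it is the only place where one genuinely uses the specific shape of the $\mathfrak{so}_5$ relations rather than abstract graded or spectral arguments.

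The main obstacle, as in the preceding Proposition, is the degree bookkeeping that pins down $d_1 = d_2 = 1$: the interplay among the expressions for $z$, $z'$, and $E_3 E_3'$ requires careful case analysis (notably splitting on whether $d_3 = 2$ or $d_3' = 2$), and one must be vigilant that all the relevant leading terms are nonzero, which is where the completely prime property of the ideals $\ideal{z}$ and $\ideal{z'}$ and the domain property of $\Ub$ enter. Once the degrees are controlled, everything downstream is essentially linear algebra constrained by the Serre relations, and the final identification $\aut(\Ub) \simeq (\comp^*)^2$ follows immediately since the diagonal maps $E_i \mapsto a_i E_i$ are precisely the torus automorphisms $\phi_{\bar{\lambda}}$, with no diagram automorphisms available for $\mathfrak{so}_5$.
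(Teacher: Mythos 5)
Your proposal is correct and follows essentially the same route as the paper: the theorem is an immediate consequence of the preceding Proposition, whose argument you reproduce faithfully — the degree bookkeeping based on Proposition \ref{propcentre}, Lemma \ref{lemmerelations} and Corollary \ref{C:aut:ngr} to force $\deg(\sigma(E_1))=\deg(\sigma(E_2))=1$, then substitution of the linear expressions into the quantum Serre relations to kill $a_2$ and $b_1$, and finally the observation that $\mathfrak{so}_5$ admits no diagram automorphisms. The paper itself leaves the Serre-relation computation as a routine verification, so your level of detail matches its own.
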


\subsection{Beyond these two cases.}

To finish this overview paper, let us mention that recently the Andruskiewitsch-Dumas Conjecture 
was confirmed by Samuel Lopes and the author, \cite{launoislopes}, in the case where $\g=\mathfrak{sl}_4$. The crucial step of the proof is to prove that, up to an element of $G$, every normal element of $U_q^+(\mathfrak{sl}_4)$ is fixed by every automorphism. This step was dealt with by first computing the Lie algebra of derivations of $U_q^+(\mathfrak{sl}_4)$, and this already requires a lot of computations! \\


\begin{flushleft}
\textbf{Acknowledgments.} I thank Jacques Alev, Fran\c cois Dumas, Tom Lenagan and Samuel Lopes for all the interesting conversations that we have shared on the topics of this paper. I also like to thank the organisers of the Workshop ''From Lie Algebras to Quantum Groups'' (and all the participants) for this wonderful meeting. Finally, I would like to express my gratitude for the hospitality received during my subsequent visit to the University of Porto, especially from Paula Carvalho Lomp, Christian Lomp and Samuel Lopes.
\end{flushleft}


\vskip 1cm


\noindent St\'ephane Launois:\\
Institute of Mathematics, Statistics and Actuarial Science,\\
University of Kent at Canterbury, CT2 7NF, UK.\\
Email: S.Launois@kent.ac.uk


\end{document}